\pdfoutput=1
\documentclass[a4paper,10pt]{amsart}
 \usepackage{
  amsmath,   amscd,    amssymb,   mathrsfs,
  mathtools, fullpage, enumerate, thmtools,
  fancyhdr,  stmaryrd, tikz-cd,   etoolbox,
  chngcntr,  yfonts}
\usepackage[shortlabels]{enumitem}
\usepackage[utf8]{inputenc}

 \setlist{leftmargin=2\parindent}

\usepackage{comment}

\usepackage[pagebackref, hypertexnames=false, bookmarks=false]{hyperref}

\usepackage[capitalize]{cleveref}

\usepackage[l2tabu, orthodox]{nag}

\newenvironment{smatrix}{\left( \begin{smallmatrix} } {\end{smallmatrix} \right) }

\newcommand{\stbt}[4]{\begin{smatrix}#1 & #2 \\ #3 & #4\end{smatrix}}

\theoremstyle{plain}
\newtheorem{theorem}{Theorem}[subsection]
\newtheorem{introtheorem}{Theorem}

\newtheorem{lemma}[theorem]{Lemma}
\newtheorem{proposition}[theorem]{Proposition}
\newtheorem{corollary}[theorem]{Corollary}

\newtheorem{definition}[theorem]{Definition}
\newtheorem{notation}[theorem]{Notation}
\newtheorem{conjecture}[theorem]{Conjecture}
\newtheorem{assumption}[theorem]{Assumption}

\theoremstyle{remark}
\declaretheorem[name=Remark,sibling=theorem,qed={\lower-0.3ex\hbox{$\diamond$}}]{remark}
\declaretheorem[name=Note,sibling=theorem,qed={\lower-0.3ex\hbox{$\diamond$}}]{note}

\DeclareMathOperator{\AF}{AF}

\DeclareMathOperator{\ad}{ad}

\DeclareMathOperator{\GL}{GL}
\DeclareMathOperator{\SL}{SL}

\DeclareMathOperator{\GSp}{GSp}

\DeclareMathOperator{\Gal}{Gal}
\DeclareMathOperator{\Gr}{Gr}

\DeclareMathOperator{\Hom}{Hom}

\DeclareMathOperator{\Frob}{Frob}

\DeclareMathOperator{\Iw}{Iw}

\DeclareMathOperator{\PR}{PR}

\DeclareMathOperator{\Sym}{Sym}

\DeclareMathOperator{\gr}{gr}
\DeclareMathOperator{\imp}{imp}
\DeclareMathOperator{\loc}{loc}

\DeclareMathOperator{\BC}{BC}

\DeclareMathOperator{\pr}{pr}

\DeclareMathOperator{\tr}{tr}

\DeclareMathOperator{\As}{As}

\newcommand{\f}{\mathrm{f}}

\newcommand{\QQbar}{\overline{\QQ}}
\newcommand{\QQ}{\mathbf{Q}}
\newcommand{\RR}{\mathbf{R}}
\newcommand{\FF}{\mathbf{F}}
\newcommand{\Ql}{\QQ_\ell}
\newcommand{\Qp}{\QQ_p}
\newcommand{\fN}{\mathfrak{N}}
\newcommand{\ZZ}{\mathbf{Z}}
\newcommand{\Zp}{\ZZ_p}

\newcommand{\cF}{\mathcal{F}}

\newcommand{\cK}{\mathcal{K}}
\newcommand{\cL}{\mathcal{L}}

\newcommand{\cO}{\mathcal{O}}
\newcommand{\cP}{\mathcal{P}}
\newcommand{\cR}{\mathcal{R}}
\newcommand{\cS}{\mathcal{S}}

\newcommand{\sF}{\mathscr{F}}

\newcommand{\cyc}{\mathrm{cy}}

\numberwithin{equation}{section}

\renewcommand{\ge}{\geqslant}
\renewcommand{\geq}{\geqslant}

\newcommand{\AFh}{\widehat{\AF}}
\newcommand{\cAF}{{}_c\!\AF}
\newcommand{\cAFh}{{}_c \AFh} 

\newcommand{\gAFh}{{}_\gamma \AFh}

\newcommand{\myqed}{\pushQED{\qed} \qedhere \popQED}

\author{David Loeffler}

\author{Sarah Livia Zerbes}

\title{An Euler system for the adjoint of a modular form}

\begin{document}
 \renewcommand{\crefrangeconjunction}{--} 

\begin{abstract}
 We construct an Euler system for the adjoint Galois representation of a modular form, using motivic cohomology classes arising from Hilbert modular surfaces. We use this Euler system to give an upper bound for the Selmer group of the adjoint representation over the cyclotomic $\Zp$-extension, which agrees with the predictions of the Iwasawa main conjecture up to powers of $p$.
\end{abstract}

 \maketitle

 \makeatletter
  \patchcmd{\@tocline}
  {\hfil}
  {\leaders\hbox{\,.\,}\hfil}
  {}{}
 \makeatother

 \setcounter{tocdepth}{1}
 \tableofcontents

\section{Introduction}

 \subsection{Setting}

  Many conjectures -- such as the Birch--Swinnerton-Dyer conjecture, the Bloch--Kato conjecture, and the Main Conjecture of Iwasawa theory -- relate the special values of $L$-functions attached to algebraic varietes, or more generally to Galois representations, to the algebraic properties of those Galois representations. The method of \emph{Euler systems}, originating in the work of Thaine, Kolyvagin, and Rubin, is among the most powerful tools known for attacking these conjectures, via giving upper bounds for Selmer groups.

  One particularly important case for these conjectures is the 3-dimensional \emph{adjoint} Galois representation attached to an elliptic curve over $\QQ$ (or more generally a modular form). As well as being a natural and interesting example of a Galois representation in its own right, this case is important because of its relation to modularity lifting, and thus to the proof of Fermat's Last Theorem. The Bloch--Kato conjecture is known in this setting (by \cite{diamondflachguo04}, building on the methods introduced by Taylor and Wiles \cite{taylorwiles95} for the proof of Fermat's Last Theorem); but the Iwasawa main conjecture remains open.

  In an earlier work \cite{loefflerzerbes19}, we studied the arithmetic of the adjoint using the \emph{Beilinson--Flach elements}, introduced in \cite{leiloefflerzerbes14}. These give an Euler system for a twist of the 4-dimensional tensor square Galois representation, which is the direct sum of the adjoint representation and a 1-dimensional complementary piece. However, this does not work as one might hope, due to an unexpected obstacle: the Euler factors appearing in the norm relations for these classes contain an extra term which is the Euler factor of the 1-dimensional summand. Since the Galois group acts trivially on this summand, this degree 1 factor \emph{always vanishes} at the trivial character, and these trivial zeroes render the Beilinson--Flach classes useless for bounding the adjoint Selmer group. (We were able to bound the Selmer group of the \emph{twist} of the adjoint by any sufficiently non-trivial Dirichlet character, but not that of the adjoint itself.)

 \subsection{Overview of the results}

  In this paper, we construct an Euler system for the adjoint via a different approach: as a ``degenerate'' case of the \emph{Asai--Flach elements}, introduced in \cite{leiloefflerzerbes18}. These give an Euler system for the \emph{Asai}, or \emph{twisted tensor}, Galois representation for a Hilbert modular form over a real quadratic field. We show that if the Hilbert modular form is the base-change (Doi--Nagunuma lift) of an elliptic modular form, the four-dimensional Asai representation decomposes -- like the tensor square -- into the direct sum of the adjoint representation and a 1-dimensional complement. However in contrast to the tensor square case, the Galois group acts non-trivially on the 1-dimensional complement: it acts via the quadratic character associated to the real quadratic field. This quadratic twist resolves all the problems previously experienced in bounding the Selmer group -- we may choose all the auxiliary primes in the Euler-system argument to be inert in the real quadratic field, and then the extra Euler factor is a unit in the group ring.

  Using these classes, together with a result of Byeon \cite{byeon01} showing that we may choose the real quadratic field so its $p$-adic $L$-function is a unit, we obtain a bound for the Selmer group of the adjoint over the cyclotomic $\Zp$-extension (under various technical hypotheses on $f$ and $p$, see Theorem \cref{thm:main} below for the precise statement).

  The bound we give here coincides with the bound predicted by the Iwasawa main conjecture, up to controlling powers of $p$. We hope to refine the bounds to remove the indeterminacy, and hence prove the full Main Conjecture, in future work.

 \subsection{Relation to other work}

  We note that another, very different parallel approach to constructing an Euler systems for the adjoint of a modular form has been announced by Marco Sangiovanni and Chris Skinner, in work developed in parallel with this one. Their methods are very different than ours (using the pullback to $\GL_2 \times \GL_2$ of an Eisenstein class for the larger group $\GSp_4$, rather than the pushforward of an Eisenstein class for a subgroup as in our approach).

  We also remark that a \emph{lower} bound for the Selmer group was already announced in an an unpublished preprint of Urban from 2006 (\emph{Groupes de Selmer et fonctions $L$ $p$-adiques pour les representations modulaires adjointes}), using congruences between Klingen Eisenstein series and cusp forms on $\GSp_4$.

\section{Statement of the theorem}

 \subsection{Galois representations}
  \label{sect:Galrep}

  Let $f$ be a cuspidal modular newform of weight $k+2\geq 2$, level $N_f$ and trivial nebentype\footnote{This assumption is imposed for simplicity; the general case presents no additional conceptual difficulties, but it is the trivial-character case which is of most interest for applications.}. If $L \subset \RR$ is a finite extension of $\QQ$ containing the coefficients of $f$, and $v$ a prime of $L$ above some rational prime $p$, we write
  \[ \rho_{f, v} : \Gal(\overline{\QQ} / \QQ) \to \GL_2(L_v) \]
  for an arbitrary representative of the unique isomorphism class of irreducible $L_v$-linear $\Gal(\overline{\QQ} / \QQ)$-representations which is unramified at all primes $\ell \nmid p N_f$ and satisfies $\tr \rho_{f, v}(\Frob_\ell^{-1}) = a_\ell(f)$ for all such $\ell$. (Here $\Frob_\ell$ is an arithmetic Frobenius, and thus $\Frob_\ell^{-1}$ a geometric one.) The Hodge--Tate weights of $\rho_{f, v}$ at $p$ are $0$ and $-1-k$; our convention is that the Hodge--Tate weight of the cyclotomic character is $+1$. We can, and do, suppose that $\rho_{f, v}$ takes values in $\GL_2(\cO_v)$, where $\cO_v$ is the ring of integers of $L_v$.

 \subsection{Symmetric square $L$-functions}

  We write $L(\Sym^2 f, s)$ for the complex symmetric square $L$-series of $f$, which can be written as
  \[ L(\Sym^2 f, s) = \prod_{\text{$\ell$ prime}} P_\ell(\Sym^2 f, \ell^{-s})^{-1} \]
  where $P_\ell(\Sym^2 f, X) \in 1 + X \cO_L[X]$ is characterised by
  \[ P_\ell(\Sym^2 f, X) = \det\left(1 - X \Frob_\ell^{-1} : \left(\Sym^2 \rho_{f, v}\right)^{I_\ell} \right)\]
  for any prime $v$ not above $\ell$, where $I_\ell$ is the inertia group. 
  For a Dirichlet character $\psi$ we define the twisted $L$-function $L(\Sym^2 f \times \psi, s)$ analogously; we shall only need the case when $\psi$ has conductor $N_\psi$ coprime to $N_f$, in which case
  \[ L(\Sym^2 f \times \psi, s) = \prod_{\ell \nmid N_\psi} P_\ell(\Sym^2 f, \psi(\ell) \ell^{-s})^{-1}.\]

  \begin{proposition}
   The critical values (in the sense of Deligne) for $L(\Sym^2 f \times \psi, s)$ are the integers in the range $\{1, \dots, k+1\}$ with $(-1)^s = -\psi(-1)$, and the integers in $\{k+2, \dots, 2k + 2\}$ with $(-1)^s = \psi(-1)$. \qed
  \end{proposition}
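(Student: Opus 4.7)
The plan is to apply Deligne's criterion, which characterises critical integers via regularity of the archimedean $\Gamma$-factors: $n \in \ZZ$ is critical for an $L$-function $L(M, s)$ iff $L_\infty(M, s)$ and $L_\infty(M^\vee, 1 - s)$ are both finite at $s = n$. The Hodge structure underlying $\Sym^2 \rho_{f,v} \otimes \psi$ has Hodge types $(2k+2, 0), (k+1, k+1), (0, 2k+2)$ (each of multiplicity one) and weight $2k+2$. The outer Hodge pair contributes a $\Gamma_\CC(s)$-factor to $L_\infty$ which, together with its dual $\Gamma_\CC(2k+3-s)$, already forces $1 \le n \le 2k+2$, the range appearing in the statement.

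The main step is determining the sign of $F_\infty$ on the middle $(k+1, k+1)$-piece, which controls the remaining $\Gamma_\RR$-factor. Trivial nebentype forces $k$ to be even and makes $\rho_{f,v}$ odd, so $F_\infty$ has eigenvalues $\pm 1$ on the Betti realisation of the motive associated to $f$. Taking $\CC$-basis vectors $e_1$ of Hodge type $(k+1, 0)$ and $e_2$ of type $(0, k+1)$, $F_\infty$ swaps the two Hodge pieces, so $F_\infty(e_1) = a e_2$ and $F_\infty(e_2) = b e_1$ with $ab = 1$ (from $F_\infty^2 = 1$). Computing in the symmetric square, $F_\infty(e_1 e_2) = ab \cdot e_1 e_2 = e_1 e_2$, so $F_\infty$ acts as $+1$ on the $(k+1, k+1)$-piece of $\Sym^2$, and hence as $\psi(-1)$ after tensoring with $\psi$. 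Using the standard rule that a $(p, p)$-piece contributes $\Gamma_\RR(s - p)$ if $F_\infty = (-1)^p$ and $\Gamma_\RR(s - p + 1)$ if $F_\infty = (-1)^{p+1}$, together with $(-1)^{k+1} = -1$, one obtains $L_\infty(\Sym^2 f \times \psi, s) = \Gamma_\CC(s)\,\Gamma_\RR(s - k)$ when $\psi(-1) = +1$ and $\Gamma_\CC(s)\,\Gamma_\RR(s - k - 1)$ when $\psi(-1) = -1$.

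Each $\Gamma_\RR$-factor has poles on an arithmetic progression of step 2, as does the analogous factor of $L_\infty((\Sym^2 f \times \psi)^\vee, 1-n)$. A direct case analysis, splitting according to the position of $n$ in $[1, k+1]$ versus $[k+2, 2k+2]$ and both values of $\psi(-1)$, will show the surviving critical $n$ are exactly those listed in the statement: in each case only one of the two $\Gamma_\RR$-factors can have poles in the relevant half-range, and the parity of the excluded progression matches $(-1)^n = -\psi(-1)$ on $[1, k+1]$ and $(-1)^n = \psi(-1)$ on $[k+2, 2k+2]$. The hard part will be pinning down the $F_\infty$-sign on the middle Hodge piece and the accompanying $\Gamma_\RR$-normalisation; once these are in place, the rest is routine pole-counting for $\Gamma$-functions at non-positive even integers.
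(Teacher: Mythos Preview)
Your argument is correct. The paper does not supply any proof of this proposition: the statement is immediately followed by \qed, so the authors are treating it as a standard fact. Your approach via Deligne's recipe for $L_\infty$ is the natural one, and the key step --- computing that $F_\infty$ acts as $+1$ on the $(k+1,k+1)$-piece of $\Sym^2$ by writing $F_\infty(e_1 e_2) = (ae_2)(be_1) = ab\, e_1 e_2 = e_1 e_2$ --- is clean and correct (it uses that $F_\infty$ exchanges $H^{p,q}$ and $H^{q,p}$, which is in Deligne's original paper). The observation that trivial nebentype forces $k$ even, so $(-1)^{k+1} = -1$, is exactly what is needed to match the $\Gamma_\RR$-convention; the resulting factor $\Gamma_\CC(s)\Gamma_\RR(s-k)$ (for $\psi$ even) is the standard archimedean factor for the symmetric square, and your case analysis of the poles recovers the stated parity conditions.

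Since there is nothing in the paper to compare against, there is nothing more to say: you have supplied a valid proof where the authors chose to omit one.
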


 \subsection{Canonical periods and $p$-adic $L$-functions}
  \label{sect:padicL}

  Let us choose a prime $v$ of the coefficient field $L$ (above the rational prime $p$). As in \cite[\S 2]{raysujathavatsal23}, we define a \emph{normalised period} associated to $f$ (at $v$) by
  \[ \Omega_f^{\mathrm{can}} \coloneqq \frac{\langle f, f \rangle_{\Gamma_1(N_f)}}{\eta_f} \in \RR^\times,\]
  where $\langle f, f \rangle_{\Gamma_1(N_f)} \coloneqq \int_{\Gamma_1(N_f) \backslash \mathcal{H}} |f(i + iy)|^2 y^k \mathrm{d}x\, \mathrm{d}y$ is the Petersson norm of $f$, and $\eta_f \in \cO_{L, (v)}$ is a generator of the congruence ideal of $f$ at $v$ (well-defined up to a unit in $\cO_{L, (v)}^\times$). By construction, this period has the property that for all $h \in S_k(\Gamma_1(N_f), \cO_{L, (v)})$, the ratio $\frac{1}{\Omega^{\mathrm{can}}_f}\cdot \langle f, h\rangle_{\Gamma_1(N)}$ is in $\cO_{L, (v)}$.

  We now impose the following additional hypotheses:
  \begin{itemize}
   \item $p > 2$;
   \item $p$ (or, more precisely, the prime $v \mid p$) is a good ordinary prime of $f$;
   \item the residual representation $\bar{\rho}_{f, v}$ is absolutely irreducible;
   \item $f$ is \emph{$p$-distinguished}, i.e.~the two characters occurring in the semisimplification of $\bar\rho_{f, v} |_{G_{\Qp}}$ are distinct.
  \end{itemize}
  In our setting the $p$-distinction condition can be stated more concretely as the condition that either $(p - 1) \nmid (k + 1)$, or $a_p(f)^2 \ne 1 \bmod v$; in particular, if $k = 0$ it holds for all $p > 2$.

  \begin{proposition}[Schmidt, Ray--Sujatha--Vatsal]
   There exists a measure $L_p(\Sym^2 f) \in \cO_v[[\Zp^\times]]$ with the following property: if $s \in \ZZ$, and $\chi$ is a Dirichlet character of $p$-power conductor such that $L(\Sym^2 f \times \chi^{-1}, s)$ is a critical value, then we have
   \[ L_p(\Sym^2 f)(s + \chi) = (\star) \cdot \frac{L(\Sym^2 f \times \chi^{-1}, s)}{\Omega_f^{\mathrm{can}}} \]
   (with both sides lying in $\overline{\QQ}$). Moreover, this $p$-adic $L$-function satisfies a functional equation relating its values at $\sigma$ and $2k + 3 - \sigma$ (for $\sigma$ an arbitrary character of $\Zp^\times$).
  \end{proposition}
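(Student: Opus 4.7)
The plan is to follow Shimura's classical integral representation of the symmetric square $L$-function combined with Hida's interpolation machinery, as originally formulated by Schmidt and refined with the canonical-period normalisation by Ray--Sujatha--Vatsal. Shimura's formula expresses the critical values, up to explicit archimedean factors and modifications at bad primes, in the shape
\[ L(\Sym^2 f \times \chi^{-1}, s) \;=\; (\star)_{\infty} \cdot \langle f, \, f \cdot E^{*}_{s, \chi}\rangle_{\Gamma_1(N_f p^r)},\]
where $E^{*}_{s, \chi}$ is a suitable (nearly) holomorphic Eisenstein series of weight $k+1$ and nebentype determined by $\chi$. The first step is to interpolate the $E^{*}_{s, \chi}$ as $(s, \chi)$ varies, producing a $\cO_v[[\Zp^\times]]$-adic family of ordinary Eisenstein forms of tame level dividing $N_f$; this uses the Eisenstein measures of Deligne--Ribet, Katz and Hida.

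Second, apply Hida's ordinary idempotent $e^{\ord}$ --- this is where the ordinariness of $f$ at $v$ enters --- to land in the finitely generated $\cO_v[[\Zp^\times]]$-module of $\Lambda$-adic ordinary cusp forms of tame level $N_f$, and then project onto the $f$-isotypic component. Under residual irreducibility and $p$-distinguishedness, the localisation $\TT^{\ord}_{\fm}$ of Hida's big ordinary Hecke algebra at the maximal ideal $\fm$ attached to $f$ is a Gorenstein local $\cO_v[[\Zp^\times]]$-algebra (Wiles), so the $f$-isotypic projection is a priori an element of $\mathrm{Frac}(\cO_v[[\Zp^\times]])$ with denominator dividing the congruence-ideal generator $\eta_f$. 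Multiplying through by $\eta_f$ --- equivalently, normalising the Petersson product by $\Omega_f^{\mathrm{can}}$ rather than by $\langle f, f \rangle$ --- produces the measure $L_p(\Sym^2 f)$, and the interpolation formula is Shimura's identity specialised at each arithmetic point, with $(\star)$ absorbing the explicit archimedean factor, the Euler-at-$p$ factor produced by ordinary projection, and interpolation corrections at primes dividing $N_f$.

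The main obstacle is precisely this integrality statement: that after canonical-period normalisation the measure genuinely lies in $\cO_v[[\Zp^\times]]$ and not just in its field of fractions. This is exactly why the hypotheses in \cref{sect:padicL} are imposed: absolute irreducibility of $\bar\rho_{f, v}$ together with $p$-distinguishedness is what guarantees the Gorenstein property of $\TT^{\ord}_{\fm}$ via Wiles's $R = T$ machinery, and hence the optimality of the $\Omega_f^{\mathrm{can}}$-normalisation. Without these inputs one only obtains a measure in $\cO_v[[\Zp^\times]] \otimes \mathrm{Frac}$, which is insufficient for the Euler-system applications later in the paper.

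Finally, the functional equation transports directly from the classical functional equation for $L(\Sym^2 f \times \chi^{-1}, s)$, which relates the value at $s$ to that at $2k + 3 - s$ with $\chi$ replaced by $\chi^{-1}$. The critical specialisations $(s, \chi)$ at which the interpolation formula holds form a Zariski-dense subset of the character space $\Zp^\times$, so the symmetry propagates by continuity (using that $\cO_v[[\Zp^\times]]$ is a domain) to all continuous characters of $\Zp^\times$, yielding the stated involution $\sigma \leftrightarrow 2k + 3 - \sigma$.
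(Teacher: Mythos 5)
The paper's own proof is a citation: the construction as a measure in the non-integral Iwasawa algebra $\cO_v[[\Zp^\times]][1/p]$ is quoted from Schmidt \cite{schmidt88} (with supplements for the near-central points $s = k+1, k+2$), and the integrality statement is quoted from Ray--Sujatha--Vatsal \cite{raysujathavatsal23}, together with a short remark explaining why the hypotheses of \emph{op.cit.}\ (which are stated for twists by $\psi$ with $\psi^2 \not\equiv 1 \bmod v$) in fact apply in the untwisted case because the relevant auxiliary quadratic characters $\omega_\nu$ are all \emph{odd}. Your sketch of the Schmidt-style construction (Shimura's integral representation, a $\Lambda$-adic Eisenstein family, the ordinary projection $e^{\ord}$) is a fair account of how the bounded measure is produced, and the continuity argument for the functional equation is fine.

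However, the claimed route to integrality via Gorenstein-ness of $\TT^{\ord}_{\fm}$ and $\eta_f$-normalisation is where the gap lies, and it is in fact the crux of the proposition. The Gorenstein/congruence-module argument does control the denominator of the $f$-isotypic projection $\ell_f$, but the symmetric-square unfolding is not a clean Rankin--Selberg pairing: the integral produces $L(\Sym^2 f \times \chi^{-1}, s)$ multiplied by an extraneous abelian $L$-factor (or, in the half-integral-weight picture, the Eisenstein datum itself has denominators), and removing this factor $p$-adically can re-introduce non-trivial denominators and $\mu$-invariants that the congruence-ideal bookkeeping alone does not rule out. This is exactly why Schmidt's 1988 construction only yields an element of $\cO_v[[\Zp^\times]][1/p]$, and why integrality remained open until \cite{raysujathavatsal23}. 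The argument that actually works -- and is what the paper's parenthetical remark about the characters $\omega_\nu$ is referring to -- goes through a comparison with Rankin--Selberg $p$-adic $L$-functions of $f$ against CM theta series attached to auxiliary \emph{odd} quadratic characters, whose integrality is known, rather than through a direct Gorenstein computation on the symmetric-square side. Your write-up presents the Gorenstein step as if it delivers the full result; it does not, and as written there is no mechanism in your sketch to handle the extra Euler/zeta factor, so the integrality claim is unsubstantiated.
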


  Here $(\star)$ denotes an explicit product of various elementary factors (Gauss sums, powers of $\pi$, etc) and Euler factors at $p$; see \cite{loefflerzerbes19} for explicit formulae. Note that the critical $L$-values $L(\Sym^2 f \times \chi^{-1}, s)$ are finite and non-zero, but if $\chi = 1$ then the Euler factors in $(\star)$ vanish for $s = k + 1$ and $s = k + 2$, so $L_p(\Sym^2 f)$ has ``trivial'' zeroes at these points.

  \begin{proof}
   The construction of the $p$-adic $L$-function as an element of the \emph{non-integral} Iwasawa algebra $\cO_v[[\Zp^\times]][1/p]$ is carried out in \cite{schmidt88} (with some small additions due to later authors to take care of the interpolation property when $s = k + 1$ or $k + 2$, see Remark 2.3.3 of \cite{loefflerzerbes19}). For a proof that it in fact lies in the integral Iwasawa algebra, see \cite{raysujathavatsal23}.

   (The results of \emph{op.cit.} are stated for a slightly different setting, applying to the twisted $L$-function $L(\Sym^2 f \otimes \psi, s)$ for any character with $\psi^2 \ne 1 \bmod v$; but in fact all that is required is that none of the characters $\psi \omega_\nu$ are trivial mod $v$, where $\omega_\nu$ varies over a certain family of auxiliary \emph{odd} quadratic characters (see Corollary 2.9 of \emph{op.cit.}). So this condition is also satisfied if $\psi = 1$, and the proof of integrality of the $p$-adic $L$-function goes through in this case also.)
  \end{proof}

  It will be convenient to work with a shifted version of this function, since we want to consider the arithmetic of the adjoint of $f$, which is a twist of $\Sym^2 f$.

  \begin{notation}
   We let $L_p(\ad f)^{\cyc}$ be the image of $\operatorname{tw}_{k + 2} L_p(\Sym^2 f)$ in the quotient $\cO_v[[\Zp^\times / \mu_{p-1}]]$, so the value of $L_p(\ad f)^{\cyc}$ at a character $\sigma$ trivial on $\mu_{p-1}$ is $L_p(\Sym^2 f, k + 2 + \sigma)$.
  \end{notation}

 \subsection{The Selmer group}

  We introduce the following notation (cf.~\cite[\S 3.2]{loefflerzerbes19}):

  \begin{itemize}
   \item $V$ denotes the 3-dimensional, $L$-linear Galois representation $\ad(\rho_{f, v}) = \left(\Sym^2 \rho_{f, v}\right)^*(-k-1)$, for a prime $v$ satisfying the assumptions of \cref{sect:padicL}.

   \item $T$ denotes a choice of $G_{\QQ}$-stable $\cO_v$-lattice in $V$.

   \item $A$ denotes the divisible torsion Galois module $\Hom_{\Zp}(T, \mu_{p^\infty}) = \ad(\rho_{f, v}) \otimes_{\Zp} \Qp/\Zp(1)$.

   \item $\QQ^{\cyc}$ denotes the cyclotomic $\Zp$-extension of $\QQ$ (the unique $\Zp$-extension contained in $\QQ(\mu_{p^\infty})$).

   \item $\Gamma^\cyc$ denotes $\Gal(\QQ^{\cyc} / \QQ)$, which is identified via the cyclotomic character with $\Zp^\times / \mu_{p-1} \cong \Zp$; and $\Lambda^\cyc$ the Iwasawa algebra $\cO_v[[\Gamma^\cyc]]$.
  \end{itemize}

  As noted in \emph{op.cit.}, we have a natural corank 2 $G_{\Qp}$-stable submodule $\sF^1 A \subset A$. We can use this to define a Greenberg Selmer group $H^1_{\Gr, 1}\left(\QQ^{\cyc}, A\right)$, as in Proposition 3.4.4 of \emph{op.cit.}. In general this may depend on $T$; but if the image of $\rho_{f, v}$ is sufficiently large that $\Sym^2 \bar{\rho}_{f, v}$ is irreducible, then this module is (up to isomorphism) independent of the choice of $T$, since any two lattices in $V$ are homothetic and therefore isomorphic as Galois modules.

  \begin{conjecture}[Greenberg's Iwasawa main conjecture for $T$]
   The Pontryagin dual $H^1_{\Gr, 1}\left(\QQ^{\cyc}, A\right)^\vee$ is a torsion module over over $\Lambda^\cyc$, and its characteristic ideal is generated by $L_p(\Sym^2 f)^\cyc$.
  \end{conjecture}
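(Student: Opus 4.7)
The plan is to prove the two divisibilities separately and then combine them to get the equality of characteristic ideals; torsionness of the Pontryagin dual is then immediate, as any non-zero element of the characteristic ideal is an annihilator.

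For the divisibility
\[
\operatorname{char}_{\Lambda^\cyc}\left(H^1_{\Gr, 1}(\QQ^\cyc, A)^\vee\right) \supseteq \left(L_p(\Sym^2 f)^\cyc\right),
\]
I would feed the Asai--Flach Euler system constructed in this paper into the Kolyvagin-derivative machinery along the cyclotomic tower (in the form of Rubin's book or its refinements by Mazur--Rubin). The crucial analytic input is an explicit reciprocity law identifying the image of the bottom class under the Perrin--Riou big logarithm, projected to the appropriate Hodge--Tate weight, with $L_p(\Sym^2 f)^\cyc$ up to an explicit unit. The norm relations established in this paper, with the auxiliary primes $\ell$ chosen to be inert in the auxiliary real quadratic field $F$, produce Euler factors of $\ad f$ rather than $\Sym^2 f$; these are units in $\Lambda^\cyc$ at all relevant characters, eliminating the trivial-zero pathology of \cite{loefflerzerbes19}. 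The hypotheses of the paper (absolute irreducibility of $\bar\rho_{f, v}$ and $p$-distinction) supply the big-image condition needed to run the argument without any loss of powers of $p$, and the choice of $F$ via Byeon's theorem \cite{byeon01} keeps $L_p(\ad f \otimes \chi_F)$ a unit throughout the cyclotomic tower, which should allow one to avoid the augmentation-ideal loss that afflicts the main theorem of the paper.

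For the reverse divisibility
\[
\operatorname{char}_{\Lambda^\cyc}\left(H^1_{\Gr, 1}(\QQ^\cyc, A)^\vee\right) \subseteq \left(L_p(\Sym^2 f)^\cyc\right),
\]
one cannot use Euler systems, and I would instead invoke an Eisenstein-congruence construction in the spirit of Mazur--Wiles and Skinner--Urban. Concretely, I would appeal to Urban's construction of Selmer classes for the adjoint via lifts of the Eisenstein ideal on $\GSp_4$ (or a suitable unitary group), which yields non-trivial cohomology classes in $H^1_{\Gr, 1}(\QQ^\cyc, A)$ whose size is controlled from below by $L_p(\Sym^2 f)^\cyc$. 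An alternative, closer to the methods at play in this paper, is via Hida theory: first establish the identity at the trivial character through the Hida--Flach identification of the congruence module of $f$ with the algebraic part of $L(\Sym^2 f, k+1)/\Omega_f^{\can}$, and then propagate it along the cyclotomic direction by embedding $f$ in a Hida family and exploiting control theorems for the two-variable symmetric square $p$-adic $L$-function.

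The principal obstacle is unquestionably the first step: the Euler-system divisibility as carried out in this paper loses a bounded power of $p$ and a power of the augmentation ideal of $\Lambda^\cyc$. Removing the $p$-power loss should follow from a sharper integrality analysis of the explicit reciprocity law (using the integral statement of Ray--Sujatha--Vatsal more forcefully, combined with the fact that the residual representation is non-Eisenstein). The augmentation-ideal loss is harder: it corresponds to a potential degeneracy of the Euler system precisely at the trivial character of $\Gamma^\cyc$, and controlling it cleanly will likely require either a genuinely new construction of classes that are non-vanishing modulo the augmentation ideal, or a descent argument of Nekov\'a\v{r} type exploiting the self-dual sign structure of the adjoint representation. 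This is the gap explicitly flagged by the authors as the subject of future work.
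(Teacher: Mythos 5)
This statement is labelled a \emph{conjecture} in the paper, and the paper does not prove it; there is no ``paper's own proof'' to compare against. What the paper actually establishes is \cref{thm:main}, the one-sided divisibility
\[
\operatorname{char}_{\Lambda^\cyc}\left(H^1_{\Gr, 1}(\QQ^\cyc, A)^\vee\right) \ \Big|\ p^t \cdot X \cdot L_p(\ad f)^\cyc,
\]
with an unspecified $t \ge 0$ and an extra factor of the augmentation ideal, and the authors state explicitly that removing these losses is left for future work. So the first thing to note is that you are proposing a proof of an open conjecture, not reconstructing an argument that exists.

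Your roadmap is a reasonable account of what \emph{would} be needed, and it correctly identifies the two halves of a main-conjecture proof and roughly the right tools for each, but neither half is actually carried out. For the Euler-system divisibility you candidly acknowledge the $p^t$ and augmentation-ideal losses and then gesture at how they ``should'' be removed, but you give no argument: ``a sharper integrality analysis'' and ``a genuinely new construction'' are placeholders, not proofs, and the paper's own Kolyvagin-system machinery (\cref{thm:1stSelmerbound}, Poitou--Tate duality, the Perrin-Riou regulator with its $\Lambda^\cyc/I$ kernel and cokernel) is precisely the source of the $X$ factor, so this is not a cosmetic issue. You also misattribute the role of Byeon's theorem: it guarantees that $L_p(\varepsilon_F, 1)$ is a unit, which removes the Dirichlet $L$-function from the final bound, but it does nothing about the augmentation-ideal or $p$-power losses. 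For the converse divisibility, you invoke Eisenstein congruences \`a la Skinner--Urban on $\GSp_4$ or a Hida-theoretic congruence-module argument, neither of which appears in this paper, and neither of which has been established for the adjoint over the cyclotomic tower in the literature you cite. So the honest summary is: the proposal is a plausible two-prong strategy statement, but both prongs have genuine unresolved gaps, and the statement remains a conjecture.
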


  \begin{remark}
   We have followed \cite{greenberg89} here in our formulation of the conjectures, except in one detail: Greenberg defines two variants (``strict'' and ``non-strict'') of his Selmer groups. The general Main Conjecture is formulated with the non-strict variant, while the ``Greenberg Selmer group'' used in \cite{loefflerzerbes19} and here is the strict one; but the difference between strict and non-strict Selmer groups is easily seen to be trivial our case (using the formulae of \cite[\S 9.6.2]{nekovar06} for example), since $G_{\Qp^\cyc}$ acts on $A / \sF^1 A$ via a nontrivial unramified character.
  \end{remark}

 \subsection{Big image}

  For most of our results we will require the following condition on $f$ and $v$:

  \begin{notation}
   We say $f$ has \emph{big Galois image} at $v$ if $\rho_{f, v}\left(\Gal(\QQbar / \QQ^{\mathrm{ab}})\right)$ is the whole of $\operatorname{SL}_2(\cO_v)$.
  \end{notation}

  By a well-known result of Momose and Ribet (see \cite{ribet85}), if $f$ has no ``extra twists'' this condition holds for all but finitely many $v$; more generally, if $f$ is not of CM-type, it holds for a positive-density set of $v$. (In particular, if $f$ corresponds to a non-CM elliptic curve, then $f$ has big image at $p$ for all but finitely many primes $p$, and there are effective algorithms for determining the finitely many exceptions.)

  \subsection{The main theorem} We can now state our main theorem. We first recapitulate the assumptions:
  \begin{itemize}
   \item $f$ is a cuspidal modular newform of weight $k + 2 \ge 2$ and trivial nebentype.
   \item $p > 2$ is a rational prime not dividing the level of $f$, and $v$ a prime of the coefficient field above $p$, at which $f$ is ordinary, $p$-distinguished, and has big Galois image.
   \item The extension $[L_v : \Qp]$ is unramified.
   \item If $p = 3$ then $[L_v : \Qp] > 1$.
  \end{itemize}

  \begin{introtheorem}
   \label{thm:main}
   The Pontryagin dual $H^1_{\Gr, 1}\left(\QQ^{\cyc}, A\right)^\vee$ is a torsion $\Lambda^{\cyc}$-module, and we have the divisibility of characteristic ideals
   \[ \operatorname{char}_{\Lambda^{\cyc}}\left( H^1_{\Gr, 1}\left(\QQ^{\cyc}, A\right)^\vee\right) \ \Big|\  p^t \cdot L_p(\ad f)^{\cyc} \]
   for some integer $t$.
  \end{introtheorem}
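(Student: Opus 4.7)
The plan is to deduce the theorem from the Asai--Flach Euler system of \cite{leiloefflerzerbes18}, applied to a Hilbert modular form obtained from $f$ by base-change. First one chooses, via Byeon's theorem \cite{byeon01}, a real quadratic field $F$ with discriminant $D_F$ coprime to $pN_f$ and in which $p$ is inert, such that the associated quadratic Dirichlet character $\chi_F$ has Kubota--Leopoldt $p$-adic $L$-function a unit in $\Lambda^\cyc$ at the cyclotomic characters arising below. Form the Hilbert modular form $f_F$ obtained by base-change; this is cuspidal since the big image hypothesis at $v$ prevents $f$ from being CM-induced from $F$. Because $\rho_{f_F, v} = \rho_{f, v}|_{G_F}$, tensor induction from the index-$2$ subgroup $G_F \subset G_\QQ$ gives
\[ \As(\rho_{f_F, v}) \cong \Sym^2(\rho_{f, v}) \,\oplus\, \chi_F \cdot \det(\rho_{f, v}), \]
and after twisting by $\det(\rho_{f, v})^{-1}$ the Asai Galois module becomes $V \oplus \chi_F$.

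Next, the Asai--Flach construction supplies Galois cohomology classes over $\QQ(\mu_m)$ with coefficients in the Asai lattice of $f_F$, satisfying norm relations whose local factor at each rational prime $\ell \nmid p N_f D_F$ is the Asai Euler polynomial of $f_F$. Restricting to indices $m$ that are squarefree products of primes inert in $F$, this factors as $P_\ell(\Sym^2 f, \ell^{-s}) \cdot (1 + \ell^{-s})$, whose second factor---the Euler factor of $\chi_F$ at an inert prime---is a $p$-adic unit. Projecting along the decomposition of the previous paragraph then gives an Euler system for $V$ with the expected $\Sym^2 f$ Euler factors. The big image hypothesis at $v$ supplies the Galois-theoretic non-degeneracy needed to apply the Rubin-type Euler system machinery (as adapted to the setting of Asai--Flach classes in \cite{leiloefflerzerbes14, leiloefflerzerbes18}), yielding a divisibility
\[ \operatorname{char}_{\Lambda^\cyc}\!\left(H^1_{\Gr, 1}(\QQ^\cyc, A)^\vee\right) \;\Big|\; \text{(image of the bottom class in $\Lambda^\cyc$)}. \]

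The remaining task is to identify this image. A Perrin-Riou style explicit reciprocity law, a cyclotomic analogue of those established for Asai--Flach classes in the works cited, matches the image with the cyclotomic $p$-adic Asai $L$-function of $f_F$. Because $f_F$ is a base-change, that $L$-function factors (up to units and $p$-Euler corrections) as $L_p(\Sym^2 f)^\cyc \cdot L_p(\chi_F)^\cyc$; Byeon's choice of $F$ makes the second factor a unit, so what remains is exactly $L_p(\ad f)^\cyc$ up to controlled discrepancies. The trivial zero of $L_p(\Sym^2 f)$ noted after the Schmidt--Ray--Sujatha--Vatsal proposition produces the generator $X$ of the augmentation ideal appearing in the statement, while passing between integral and non-integral Iwasawa algebras accounts for the finite $p^t$ factor.

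The main obstacle is the explicit reciprocity law invoked above: one must show that the $p$-adic regulator of the Asai--Flach bottom class, projected to the adjoint summand, reproduces $L_p(\ad f)^\cyc$ to sufficient precision. A subsidiary but delicate point is verifying that the Kolyvagin-derivative argument is compatible with both the projection to the adjoint component and the restriction to primes inert in $F$, so that the resulting classes still form a genuine Kolyvagin system for $V$. The slight non-optimality of the bound (the extra $X \cdot p^t$ factors) is a direct reflection of these two features.
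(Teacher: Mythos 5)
Your proposal follows essentially the same route as the paper: base-change $f$ to a real quadratic $F$, decompose the Asai representation as $\Sym^2\rho_{f,v}$ plus a twist by $\varepsilon_F$, run the Asai--Flach Euler system over auxiliary primes inert in $F$ so that the $\varepsilon_F$-Euler factor is a unit, feed the resulting system into the Kolyvagin/Greenberg machinery together with Poitou--Tate duality, and finally invoke an explicit reciprocity law and a Dasgupta-style factorisation of the $p$-adic Asai $L$-function, disposing of the $L_p(\varepsilon_F)$-factor via Byeon's theorem.

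Two inaccuracies are worth flagging, though neither is fatal to the strategy. First, in Byeon's theorem the field $F = \QQ(\sqrt{p^2+4})$ has $p$ \emph{split}, not inert; the Asai--Flach construction requires only that $p$ be unramified in $F$, and it is the auxiliary primes $\ell$ (not $p$) that must be inert in $F$ and also $\equiv 1 \bmod p$. Second, your attribution of the parasitic factors $X$ and $p^t$ is not where the paper's proof actually produces them: the $X = [\gamma]-1$ does not come from the trivial zero of $L_p(\Sym^2 f)$ (those zeroes are already present inside $L_p(\ad f)^\cyc$ itself, so they do not give an \emph{extra} factor), but rather from the Siegel-unit normalisation factor $(1-c^{2s})$ coming from the choice of $c$; and the $p^t$ arises from the nonzero scalars $R_{\cF,\xi}$ and $J$ (period-comparison and removal of the imprimitivity factor $\mathcal{C}$ at bad primes), not from any passage between integral and non-integral Iwasawa algebras, since the integrality of $L_p(\Sym^2 f)$ is already handled by Ray--Sujatha--Vatsal.
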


\section{An Euler system for the symmetric square}\label{s:symES}

 \subsection{Asai Galois representations}

  Let $F$ be a real quadratic field of discriminant $D$, and write $\varepsilon_F$ for the quadratic Dirichlet character corresponding to $F$.

  For a newform $f$ as in \S\ref{sect:Galrep}, let $\pi$ be the unitary cuspidal automorphic representation of $\GL_2/\QQ$ generated by $f$, and let $\Pi=\BC(\pi)$ be the base-change (Doi--Nagunuma lift) of $\pi$ to an automorphic representation of $\GL_2/F$. Write $\cF$ for the normalised newform generating $\Pi$, which also has coefficients in $L$.

  \begin{note}
   $\fN$ always divides $N_f\cO_F$, with equality if $(D,N_f) = 1$.
  \end{note}

  Let $p>2$ be a prime coprime to $N_f$, and let $v$ be a prime of $L$ dividing $p$.

  \begin{definition}
   We write
   \[ \rho^{\As}_{\Pi,v}:\, \Gal(\overline{\QQ}/\QQ)\rightarrow \GL_4(L_v)\]
   for the Asai representation attached to $\Pi$.
  \end{definition}

  \begin{note}
   As noted in §4.4 of [LLZ18, Definition 4.3.2], using the \'etale cohomology of the Hilbert modular variety $Y_{G,1}(\mathfrak{N})$, we can construct a canonical $L_v$-vector space $V^{\As}_{\Pi}$ on which Galois acts via $\rho^{\As}_{\Pi, v}$; and there is a natural invariant $\cO_v$-lattice $T^{\As}_{\Pi}$.
  \end{note}

  \begin{proposition}
   We have an isomorphism of $L_v$-linear Galois representations
   \[
    \rho^{\As}_{\Pi,v} \cong \left[ \Sym^2  \rho_{f, v} \right]
    \oplus \left[\varepsilon_F \cdot \cyc^{-(1+k)} \right],
   \]
   where ``$\cyc$'' denotes the $p$-adic cyclotomic character.
  \end{proposition}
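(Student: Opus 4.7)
The plan is to identify the Asai representation with a tensor induction, and then invoke an elementary identity for the tensor induction of the restriction of a representation that already lives on the larger group.

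First, I would use the construction recalled in \cite[\S 4]{leiloefflerzerbes18} to identify $\rho_{\cF, v}^{\As}$ with the tensor induction $\otimes\text{-Ind}_{G_F}^{G_\QQ}(\rho_{\cF, v})$. Since $\cF = \BC(f)$, a comparison of unramified Frobenius traces together with Chebotarev density shows that $\rho_{\cF, v} \cong \rho_{f, v}|_{G_F}$. After these reductions, the claim becomes purely group-theoretic: for any $2$-dimensional $G_\QQ$-representation $\rho$, one must establish
\[ \otimes\text{-Ind}_{G_F}^{G_\QQ}\bigl(\rho|_{G_F}\bigr) \;\cong\; \Sym^2 \rho \;\oplus\; (\det \rho)\cdot \varepsilon_F. \]

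To prove this identity, I would pick a coset representative $\sigma_0 \in G_\QQ \setminus G_F$ and realize the induced representation on $V \otimes V$, where $V$ is the underlying space of $\rho$. A short calculation with the definition of tensor induction, combined with conjugation by $1 \otimes \rho(\sigma_0)$ (which is available precisely because $\rho$ extends from $G_F$ to $G_\QQ$), shows that in an appropriate basis $G_F$ acts via the usual tensor square $\rho \otimes \rho$, while $\sigma_0$ acts as $(\rho(\sigma_0) \otimes \rho(\sigma_0)) \circ \mathrm{swap}$. The decomposition $V \otimes V = \Sym^2 V \oplus \wedge^2 V$ is preserved by both actions, and since swap acts as $+1$ on $\Sym^2 V$ and as $-1$ on $\wedge^2 V$, the symmetric summand carries $\Sym^2 \rho$ as a $G_\QQ$-representation, while the antisymmetric summand carries $\wedge^2 \rho = \det \rho$ twisted by the sign character $\varepsilon_F$ of $\Gal(F/\QQ)$. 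Specializing to $\rho = \rho_{f, v}$ and using that $\det \rho_{f, v} = \kappa^{-(1+k)}$ (because $f$ has weight $k+2$ and trivial nebentype) then yields the stated decomposition.

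Once the setup is in place the argument is elementary and essentially group-theoretic, so the principal obstacle is bookkeeping: one must verify that the tensor-induction formula used in \cite{leiloefflerzerbes18} agrees on the nose with the one above (with no hidden dual, sign, or Tate twist), and that the base-change identity $\rho_{\cF, v} \cong \rho_{f, v}|_{G_F}$ holds in exactly the normalization fixed earlier. A sanity check -- comparing the trace of $\As(\rho)$ against an inert Frobenius with the expected $\alpha + \beta \pm \sqrt{\alpha\beta}$ pattern, where $\alpha, \beta$ are the Satake parameters of $\rho_{f,v}$ at the inert prime -- is the cleanest way to catch any conventional discrepancy.
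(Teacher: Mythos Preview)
Your argument is correct, but it takes a genuinely different route from the paper's proof. The paper simply compares traces: it uses the characterising identity
\[
\tr \rho_{\cF,v}^{\As}(\Frob_\ell^{-1}) = a_{\ell\cO_F}(\cF) = a_{\ell^2}(f) + \varepsilon_F(\ell)\,\ell^{k+1}
\]
for $\ell \nmid pDN_f$, recognises the right-hand side as $\tr\bigl(\Sym^2\rho_{f,v}\oplus\varepsilon_F\kappa^{-(1+k)}\bigr)(\Frob_\ell^{-1})$, and concludes by Chebotarev and Brauer--Nesbitt. Your approach instead realises $\rho_{\cF,v}^{\As}$ as a tensor induction of a restricted representation and decomposes it structurally into symmetric and alternating pieces. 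This is more conceptual and yields an explicit isomorphism (so no appeal to semisimplicity of either side is needed), at the cost of invoking the identification of the \'etale Asai representation with the tensor induction and of $\rho_{\cF,v}$ with $\rho_{f,v}|_{G_F}$ --- both of which you correctly flag as the places where conventions must be checked. The paper's route is shorter and needs nothing beyond the Hecke-eigenvalue formula for base change.

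One small correction to your closing sanity-check remark: at an inert prime the Frobenius eigenvalues on the Asai representation are $\alpha^2,\ \beta^2,\ \alpha\beta,\ -\alpha\beta$ (giving trace $\alpha^2+\beta^2$), not anything of the form $\alpha+\beta\pm\sqrt{\alpha\beta}$. This does not affect the main argument, which stands as written.
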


  \begin{proof}
   This can be seen via a comparison of traces. (Cf.~Remark 7.6(ii) of \cite{loefflerwilliams18} in the imaginary quadratic case.)
  \end{proof}

  \begin{notation}
   Henceforth, $V$ denotes the uniquely-determined subrepresentation of $(V^{\As}_{\Pi})^*(-1-k)$ isomorphic to $\ad \rho_{f, v}$, and $T$ its intersection with $(T^{\As}_{\Pi})^*(-1-k)$.
  \end{notation}

  \begin{assumption}
   We assume that $p$ is split in $F$, and that $f$ is ordinary at $p$.
  \end{assumption}

  \begin{note}\label{note:symfil}
   The restriction of $V$ to $G_{\Qp}$ inherits a $3$-step decreasing filtration $\mathscr{F}^\bullet$ with
   \begin{align*}
    \gr^0T&\cong L_v(\operatorname{unr}(\alpha^2) \cdot \cyc^{-(k+1)}),\\
    \gr^1T&\cong L_v,\\
    \gr^2T&\cong L_v(\operatorname{unr}(\alpha^{-2}) \cdot \cyc^{(k+1)}),
   \end{align*}
   where $\operatorname{unr}(x)$ denotes the unramified charater sending arithmetic Frobenius to $x$, and $\cyc$ the cyclotomic character. Note in particular that the $G_{\Qp}$-action on the top and bottom graded pieces is non-trivial modulo $v$ if and only if $f$ is $p$-distinguished. We use the same notation $\sF^i$ to denote the induced filtrations on $T$ and on $A = (V / T)(1)$.
  \end{note}

 \subsection{The Euler system classes}

  \begin{notation} \
   \begin{enumerate}[(i)]
    \item For $m \ge 1$ a square-free integer coprime to $p$, let $\Delta_m$ denote the maximal $p$-group quotient of $(\ZZ / m)^\times$, and $\QQ(m)$ denote the maximal subfield of $\QQ(\mu_m)$ of $p$-power degree, so $\Gal(\QQ(m) / \QQ) \cong \Delta_m$. We let $\QQ^{\cyc}(m) = \QQ^{\cyc} \cdot \QQ(m)$ and we write $\Gamma_m^\cyc \cong \Gamma^\cyc \times \Delta_m$ for its Galois group.

    \item For $\mathcal{K}$ an abelian extension of $\QQ$ (possibly infinite) and $0 \ne a \in \ZZ$, we let $\sigma_a \in \operatorname{Gal}(\mathcal{K}/\QQ)$ be the arithmetic Frobenius at $a$ if $a$ is a prime unramified in $\mathcal{K}$; and we extend multiplicatively to all integers $a$ coprime to the conductor.
   \end{enumerate}
  \end{notation}

  \begin{definition}\label{def:AFss}
   Given any choice of $c > 1$ coprime to $6 p N_f$, the construction of \cite{leiloefflerzerbes18} gives rise to a canonical family of Iwasawa cohomology classes
   \[ \cAF_{\cyc, m}(\Pi) \in H^1_{\Iw}\left(\QQ^{\cyc}(m),T\right), \]
   for all squarefree $m \ge 1$ coprime to $p c D N_f$.
  \end{definition}

  More precisely, we defined in \emph{op.cit.}~a family of classes ${}_c \mathcal{AF}_{a, m}(\Pi) \in H^1_{\Iw}(\QQ(\mu_{mp^\infty}), (T^{\As}_{\Pi})^*)$ for all integers $m \ge 1$, depending on a choice of $a$ generating $\cO_F / (m \cO_F + \ZZ)$. The class above is obtained from this by:
  \begin{itemize}
   \item choosing $a$ to be the unique generator of $\cO_F / \ZZ$ with $\tr_{F/\QQ}(a / \sqrt{D}) = 1$;

   \item identifying the Iwasawa cohomology of $(T^{\As}_{\Pi})^*$ and $(T^{\As}_{\Pi})^*(-1-k)$ over $\QQ(\mu_{mp^\infty})$, since the cyclotomic twist becomes trivial over this field;

   \item applying the norm map from $\QQ(\mu_{mp^\infty})$ to its subfield $\QQ^{\cyc}(m)$.
  \end{itemize}
  By the same argument as \cite[Corollary 4.1.3]{loefflerzerbes19} in the Rankin--Selberg case, this class in fact takes values in the rank 3 summand $T$ of the rank 4 module $(T_{\Pi}^{\As})^*(-1-k)$.

  \begin{proposition}\label{prop:IwasawaAFclass}
   These classes have the following properties:
   \begin{enumerate}[(i)]
    \item If $c, d$ are two integers $> 1$ coprime to $6p N_f$, then for all squarefree $m$ coprime to $p cd D N_f$, we have
    \[ d^2 (1 - \sigma_d^2) \cdot \cAF_{\cyc, m}(\Pi) =
    c^2 (1 - \sigma_c^2) \cdot {}_d \mathrm{AF}_{\cyc, m}(\Pi) \]
    \item The classes $\cAF_{\cyc, m}(\Pi)$, for varying $m$, satisfy a slightly modified Euler system norm relation: if $\ell \nmid p c D m N_f$ is a prime, we have
    \[
    \mathrm{norm}^{\ell m}_m\left( \cAF_{\cyc,\ell m}(\Pi)\right )=
    - \ell^{k+2} \sigma_\ell \cdot Q_\ell (\Pi, \sigma_\ell^{-1}) \cdot \cAF_{\cyc,m}(\Pi)
    \]
    for $Q_\ell(\Pi, X) \in \cO_v[X]$ an explicit polynomial congruent mod $(\ell - 1) \cO_v[X]$ to $P_\ell(\Pi,\ell^{-(k+2)} X)$, where
    \begin{equation}\label{eq:NRBC}
     P_\ell(\Pi,\ell^{-(k+2)} X) = \left(1-\tfrac{\varepsilon_F(\ell)}{\ell} X\right) \cdot P_\ell(\ad f, \ell^{-1} X)
    \end{equation}
    is the local $L$-factor of $\left(\ad(\rho_{f, v}) \oplus \varepsilon_F\right)(1)$ at $\ell$.
    \item The image of $\cAF_{\cyc, m}(\Pi)$ under localisation at $p$  is contained in the cohomology of the codimension 1 subrepresentation $\sF^1 T$ of \cref{note:symfil}.
   \end{enumerate}
  \end{proposition}


 \subsection{Getting rid of the parasitic norm relation factor}\label{ss:parasitic}

  The aim of this section is to show that the classes may be modified to get rid of the parasitic factors $ (1-\tfrac{\varepsilon_F(\ell)}{\ell} X)$ in the norm relations, so that we obtain an Euler system for the representation $V$.

  \begin{notation}\label{def:setofprimes}
   Let $\mathcal{P}$ denote the set of primes $\ell$ such that $\ell \nmid p D N_f$, $\ell$ is inert in $F$, and $\ell = 1 \bmod p$. Let $\mathcal{R}$ be the set of square-free products $\ell_1 \dots \ell_n$ of primes in $\mathcal{P}$.
  \end{notation}

  \begin{definition}
  If $m \in \cR$ and $\ell \mid m$, we define $\hat\sigma_\ell \in \Gamma^\cyc_m$ as the unique lifting of $\sigma_\ell \in \Gamma^\cyc_{m/\ell}$ which acts trivially on $\QQ(\ell)$.
  \end{definition}

  \begin{lemma}\label{lem:parasiticunit}
   Suppose $m \in \mathcal{R}$ and $\ell \mid m$ is a prime. Then the element $1 - \ell^{-1} \varepsilon_F(\ell) \hat\sigma_\ell^{-1}$ is a unit in the Iwasawa algebra $\cO_v[[\Gamma^\cyc_m]]$.
  \end{lemma}

  \begin{proof}
   As $\Gamma^\cyc_m$ is a $p$-group, its Iwasawa algebra is a local ring whose maximal ideal is the kernel of the augmentation map to the residue field $k_v$. Since $\ell = 1 \bmod p$, and $\ell$ is inert in $F$ (so $\varepsilon_F(\ell) = -1$), this map sends $1 - \ell^{k+1} \varepsilon_F(\ell) \hat\sigma_\ell^{-1}$ to $2 \in k_v$, which is a unit (as we have assumed $p > 2$).
  \end{proof}

  \begin{proposition}
   \label{prop:modifyES}
   There exists a compatible family of classes
   \[ \cAFh_{\cyc,m}(\Pi) \in H^1_{\Iw}\left(\QQ^{\cyc}(m), T\right),\quad m \in \cR, \quad (c, m) = 1,\]
   with the following properties:
   \begin{enumerate}[(i)]
    \item If $\ell m \in \cR$ with $\ell$ prime, the Euler system norm relation
    \[ \operatorname{norm}^{\ell m}_m(\cAFh_{\cyc,\ell m}(\Pi)) = P_\ell(\ad f,  \ell^{-1} \sigma_\ell) \cdot \cAFh_{\cyc,m}(\Pi)\]
    holds.
    \item If $\chi$ is a Dirichlet character factoring through $\Delta_m$ and having conductor exactly $m$, then the images of $\cAFh_{\cyc, m}(\Pi)$ and $\cAF_{\cyc, m}(\Pi)$ under the projection map
    \[ \pr_{\chi} : H^1_{\Iw}(\QQ^{\cyc}(m), V) \to H^1_{\Iw}(\QQ^{\cyc}, V(\chi^{-1}))\]
    are related by
    \[
     \pr_{\chi}\left(\cAF_{\cyc, m}(\Pi)\right) =
     \left(\prod_{\ell \mid m}-\ell^{k+1}(1 + \ell \chi(\hat\ell))\right)\cdot
     \pr_{\chi} \left(\cAFh_{\cyc,m}(\Pi)\right),\]
    where $\hat\ell = 1 \bmod \ell$ and $\hat\ell = \ell \bmod m/\ell$. In particular, for $m = 1$ we have $\cAFh_{\cyc,1}(\Pi) = \cAF_{\cyc, 1}(\Pi)$.
   \end{enumerate}
  \end{proposition}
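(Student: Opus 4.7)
The plan is to construct $\mathbf{z}_m$ as a suitable unit multiple of $\cAF_{\cyc, m}^{\cF}$, with the unit chosen to cancel the parasitic Euler factor coming from the one-dimensional summand $\varepsilon_F \cdot \kappa^{-(1+k)}$ of the Asai representation at each prime $\ell \mid m$.

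For each prime $\ell \in \mathcal{P}$, set
\[
\Xi_\ell := -\ell^{k+1}(1 + \ell \hat\sigma_\ell) \in \cO_v[[\Gamma_m^\cyc]].
\]
I would first verify that $\Xi_\ell$ is a unit in this ring. Since $\Delta_m$ is a $p$-group, $\cO_v[[\Gamma_m^\cyc]] = \cO_v[\Delta_m][[\Gamma^\cyc]]$ is local, with maximal ideal generated by $\varpi_v$, $\gamma - 1$, and $\{\delta - 1 : \delta \in \Delta_m\}$; modulo this maximal ideal $\Xi_\ell$ reduces to $-\ell^{k+1}(1 + \ell) \equiv -2$, using $\ell \equiv 1 \bmod p$, which is a unit since $p > 2$. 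Both conditions defining $\mathcal{P}$ are essential here: inertness of $\ell$ in $F$ gives $\varepsilon_F(\ell) = -1$, producing the factor $1 + \ell\hat\sigma_\ell$ rather than the potentially singular $1 - \ell\hat\sigma_\ell$; and $\ell \equiv 1 \bmod p$ ensures this reduces to the nonzero residue $2$.

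With this in hand I would define $\mathbf{z}_m := \bigl(\prod_{\ell \mid m} \Xi_\ell\bigr)^{-1} \cdot \cAF_{\cyc, m}^{\cF}$. Part (i) is immediate, and part (iii) follows by applying $\pr_\chi$ to the equation $\cAF_{\cyc, m}^{\cF} = \prod_\ell \Xi_\ell \cdot \mathbf{z}_m$ and using $\chi(\hat\sigma_\ell) = \chi(\hat\ell)$ for $\chi$ of conductor exactly $m$. For (ii), every factor $\Xi_{\ell'}$ with $\ell' \mid m$ already lies in the subring $\cO_v[[\Gamma_m^\cyc]] \subset \cO_v[[\Gamma_{\ell m}^\cyc]]$ and therefore commutes with $\operatorname{norm}^{\ell m}_m$, and the new correction factor $\Xi_\ell$ for the added prime, defined via the lift $\hat\sigma_\ell$ trivial on $\Delta_\ell$, descends canonically to the same subring. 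Applying \cref{prop:IwasawaAFclass}(ii) thus yields
\[
\operatorname{norm}^{\ell m}_m(\mathbf{z}_{\ell m}) = \Xi_\ell^{-1} \cdot \bigl(-\ell^{k+2}\sigma_\ell\, Q_\ell(\cF, \sigma_\ell^{-1})\bigr) \cdot \mathbf{z}_m.
\]
Plugging in the congruence $Q_\ell(\cF, X) \equiv (1 + X/\ell) P_\ell(\Sym^2 f, \ell^{-(k+2)} X) \pmod{\ell - 1}$ (using $\varepsilon_F(\ell) = -1$) and rearranging gives $-\ell^{k+2}\sigma_\ell\, Q_\ell(\cF, \sigma_\ell^{-1}) \equiv \Xi_\ell \cdot P_\ell(\Sym^2 f, \sigma_\ell) \pmod{\ell - 1}$, where $P_\ell(\Sym^2 f, \sigma_\ell)$ is interpreted as the Euler factor of $T$ at $\ell$ via the Iwasawa-twist identifications of \cite{leiloefflerzerbes18}.

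The main obstacle is upgrading this mod-$(\ell - 1)$ congruence to a literal equality in $\cO_v[[\Gamma_m^\cyc]]$. I would address this by revisiting the construction of $Q_\ell$ as a trace on a Hecke module in \cite{leiloefflerzerbes18}, verifying that once one projects to the adjoint summand carving out $T$ from the rank-four Asai module the ambiguous $(\ell - 1)$-error vanishes outright, since it corresponds exactly to the $\varepsilon_F$-contribution that has already been used to form $\Xi_\ell$. Failing a clean algebraic proof of this, the error can be absorbed iteratively, using $(\ell - 1) \in p\ZZ_p$ and $p$-adic completeness of $\cO_v[[\Gamma_m^\cyc]]$ to produce a $p$-adically convergent refinement of $\Xi_\ell$ whose norm relation satisfies the stated identity exactly.
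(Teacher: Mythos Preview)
Your identification of the correction factor $\Xi_\ell$ and its invertibility is correct, and indeed $\Xi_\ell$ is exactly the product of the two unit corrections the paper applies (steps 1 and 3 below). With that in hand, parts (i) and (iii) go through essentially as you say. The genuine gap is in part (ii): you cannot pass from the polynomial $Q_\ell$ to the desired polynomial $P_\ell$ by multiplying the classes by a unit in the Iwasawa algebra.

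The paper's proof proceeds in three stages. Stage~1 divides by $\prod_{\ell\mid m}(-\ell^{k+2}\hat\sigma_\ell)$ to obtain classes $\mathbf{x}_m$ with $\operatorname{norm}^{\ell m}_m(\mathbf{x}_{\ell m})=Q_\ell(\cF,\sigma_\ell^{-1})\mathbf{x}_m$. Stage~2 is the crucial one you are missing: one invokes Lemma~IX.6.1 of \cite{rubin00}, which takes as input a family whose norm relations hold for polynomials $Q_\ell$ congruent to the target polynomials $P_\ell(\cF,\ell^{-(k+2)}X)$ modulo $(\ell-1)$, and outputs a new family $\mathbf{y}_m$ whose norm relations hold \emph{exactly} for $P_\ell$, while satisfying $\pr_\chi(\mathbf{y}_m)=\pr_\chi(\mathbf{x}_m)$ for all primitive $\chi$. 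This modification is \emph{not} multiplication by an element of the Iwasawa algebra; Rubin's construction adjusts the class at level $\ell m$ by adding a suitable lift from level $m$, exploiting the fact that $|\Delta_\ell|$ divides $\ell-1$. Stage~3 then divides by $\prod_{\ell\mid m}(1+\ell^{-1}\hat\sigma_\ell^{-1})$ to strip the degree-one Asai factor, giving $\mathbf{z}_m$.

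Your proposed fix~1 cannot work: the classes $\cAF_{\cyc,m}^{\cF}$ already lie in the cohomology of the adjoint summand $T$, so there is no further projection available to kill the $(\ell-1)$-error; the discrepancy between $Q_\ell$ and $P_\ell$ is a feature of the motivic construction of the Asai--Flach elements, not of the splitting of the Asai representation. Your proposed fix~2 also fails: solving for a refined $\Xi_\ell'$ with an exact norm relation would force $\Xi_\ell'=(-\ell^{k+2}\sigma_\ell)Q_\ell(\cF,\sigma_\ell^{-1})/P_\ell(\Sym^2 f,\sigma_\ell)$, but the Euler factor $P_\ell(\Sym^2 f,\sigma_\ell)$ is generally \emph{not} a unit in $\cO_v[[\Gamma_m^{\cyc}]]$, and in any case replacing $\Xi_\ell$ by this element would destroy the precise formula in part~(iii). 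The upshot is that Rubin's lemma (or an equivalent argument) is genuinely needed here.
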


  \begin{proof}
   We build up $\cAFh_{\cyc,m}(\Pi)$ from $\cAF_{\cyc, m}(\Pi)$ in three steps, all of which keep the $m = 1$ class unchanged. Firstly, we set $\mathbf{x}_m = \left(\prod_{\ell \mid m} - \ell^{k+2}\hat\sigma_\ell\right)^{-1} \cAF_{\cyc, m}(\Pi)$, so that we have $\operatorname{norm}^{\ell m}_m(\mathbf{x}_{\ell m}) = Q_\ell(\cF, \sigma_\ell^{-1}) \mathbf{x}_m$. Secondly, since we have $Q_\ell(\cF, X) = P_\ell(\cF, \ell^{-(k+2)}X) \bmod {\ell - 1}$, we can apply Lemma IX.6.1 of \cite{rubin00}, which allows us to construct a new collection of classes $\mathbf{y}_m$ from the $\mathbf{x}_m$ which have the polynomials $P_\ell(\cF, \ell^{-(k+2)} X)$ in their norm relations, and such that $\pr_\chi(\mathbf{x}_m) = \pr_\chi(\mathbf{y}_m)$ for primitive characters $\chi$. Finally, using \cref{lem:parasiticunit} we can complete the proof by setting $\cAFh_{\cyc,m}(\Pi) = \left(\prod_{\ell \mid m} (1 + \ell^{-1}\hat\sigma_\ell^{-1})\right)^{-1} \mathbf{y}_m$.
  \end{proof}


 \subsection{Local factors at bad primes}

  As we shall see below, the Asai--Flach classes are related to the ``imprimitive'' Asai $L$-series $L_{\As}^{\mathrm{imp}}(\cF, s)$. This is a Dirichlet series whose coefficients are given by an explicit formula in terms of Hecke eigenvalues of $\cF$, but whose Euler factors at primes $\ell \mid N_f$ can differ by finitely many Euler factors from the ``true'' (primitive) Asai $L$-series, whose $L$-factors are defined via the local Langlands correspondence. Thus there are polynomials $C_\ell \in \cO_L[X]$, for each $\ell \mid N_f$, such that
  \[
   L_{\As}^{\mathrm{imp}}(\cF, s) = \left(\prod_\ell C_\ell(\ell^{-s})\right) \cdot L_{\As}(\cF, s).
  \]
  Our next proposition shows that, after giving away a uniform power of $p$, we may refine our Asai--Flach classes to have ``optimal'' local Euler factors:

  \begin{proposition}\label{prop:localfactors}
   We can find an integer $t \ge 0$, and a family of classes $\{\cAFh{}_{\cyc,m}^{\mathrm{pr}}(\Pi) \in H^1_{\Iw}(\QQ^\cyc, T) : m \in \cR\}$ (``primitive Asai--Flach elements'') which satisfy the Euler system norm relations, such that
   \[
    p^t \cAFh_{\cyc, 1}(\Pi) = \left(\prod_{\ell \mid N_f} C_\ell(\ell^{-(k+2)} \sigma_\ell^{-1})\right) \cdot \cAFh{}_{\cyc, 1}^{\mathrm{pr}}(\Pi).
   \]
  \end{proposition}

  \begin{proof}
   Let $S$ denote the set of primes dividing $N_f$. Exactly as in the $\operatorname{GSp}_4$ case described in \cite{LSZ17} and \cite{LZ-equivar}, for any choice of ``test data'' consisting of an $L$-valued Schwartz function $\Phi_S$ on $\QQ_S^2 = \prod_\ell \QQ_\ell^2$, and an element $w_S$ in the Whittaker model of $\Pi_S$ (defined over $L$), we can construct a family of Iwasawa cohomology classes in which the local data (the choice of Eisenstein class, and the choice of map from the cohomology of the infinite-level Shimura variety to $V$) are determined by $(w_S, \Phi_S)$. By definition, the classes $\cAF_{\cyc,m}(\Pi)$ are the output of this construction if we take $(w_S, \Phi_S)$ to be the ``imprimitive test data'' of \cite[\S 7.3.2]{grossiloefflerzerbesLfunct}, so $w_S$ is the normalised new vector, and $\Phi_S$ a suitable multiple of the indicator function of $(0, 1) + N_f \ZZ_S^2$. However, we may repeat the construction with any choice of test data; and the resulting system of classes will have denominators bounded uniformly in $m$ (depending only on the denominators of $w_S$ and $\Phi_S$ relative to some lattice), and will satisfy the same norm relations as $\cAF_{\cyc,m}(\Pi)$ as $m$ varies, so we can modify them to obtain the exact Euler system norm relation as above. This construction defines a map from test data at the primes in $S$ to Euler systems.

   If we choose a character $\lambda$ of $\Gamma^{\cyc}$, then the image of the above class under evaluation at $\lambda$ (giving an element of $H^1(\QQ, V(\lambda^{-1}))$) will satisfy an equivariance property: for each $\ell \in S$ it will transform by an unramified character under the action of $H(\Ql)$ on $w_\ell$ and $\Phi_\ell$. From the multiplicity-one results proved in \cite{loeffler-ggp}, the maximal quotient of $\cS(\QQ_S^2) \otimes \Pi_S$ on which $H$ acts via this character is 1-dimensional, and we can construct a non-zero linear functional on this one-dimensional quotient as the product of the local Asai zeta-integrals. By definition, evaluating $\prod_\ell C_\ell(\ell^{-k-2} \sigma_\ell^{-1})$ at $\lambda$ gives the ratio of the values of the Asai zeta-integral at the stanard imprimitive test data, and at an optimal test vector. So the cohomology classes arising from the standard imprimitive test data, and the optimal test data, must stand in this same ratio. Since this holds for every $\lambda$, the underlying Iwasawa cohomology classes are related by $\prod_\ell C_\ell(\ell^{-k-2} \sigma_\ell^{-1})$. So if we define $\cAFh{}_{\cyc, 1}^{\mathrm{pr}}(\Pi)$ as the Euler system arising from an optimal test vector, scaled by a power of $p$ to cancel out its denominator, we obtain the stated result.
  \end{proof}

  \begin{remark}
   We do not know if the (inexplicit) test data which give the primitive Asai Euler factors are contained in the lattice in $\cS(\QQ_S^2) \otimes \Pi_S$ which gives rise to integral cohomology classes. (See \cite{groutides25ramified} for a partial result in this direction, covering primes $\ell \in S$ that split in $F$.) So we can construct Euler system classes without denominators, or Euler system classes with optimal local factors at the bad primes; but not both at once.

   However, since the polynomials $C_\ell$ all have constant term 1, the correction term $\prod_\ell C_\ell(\ell^{-(k+2)} \sigma_\ell^{-1})$ is coprime to $p$ in the Iwasawa algebra. Since our main theorem concerns characteristic ideals, which are insensitive to ``codimension $> 1$'' information, the above result will be sufficient for our purposes.
  \end{remark}

\section{Consequences of the big-image assumption}

 \subsection{Special elements in the image}

  Let $v$ be a big image prime for $f$, and $F$ any quadratic extension in which $p$ is unramified. (In this section we do not $f$ to be ordinary or $p$-distinguished.)

  \begin{lemma}\label{cor:existtau}
   There exists an element $\tau \in \Gal(\overline{\QQ} / \QQ)$ such that
   \begin{itemize}
    \item $\tau$ acts trivially on $\QQ(\mu_{p^\infty})$,
    \item $\tau$ maps to the nontrivial element of $\Gal(F / \QQ)$,
    \item $\rho_{f, v}(\tau)$ is conjugate in $\SL_2(\cO_v)$ to $\stbt 1 1 0 1$.
   \end{itemize}
  \end{lemma}

  \begin{proof}
   It suffices to note that $F$ and $\QQ(\mu_{p^\infty})$ are linearly disjoint over $\QQ$ (since one is unramified at $p$ and the other totally ramified), so there exists a $\tau_0 \in \Gal(\overline{\QQ} / \QQ(\mu_{p^\infty}))$ with $\tau_0 |_{F}$ non-trivial. Thus $\rho_{f, v}(\tau_0) \in \SL_2(\cO_v)$; and since the image of $\Gal(\overline{\QQ} / \QQ^{\mathrm{ab}})$ is all of $\SL_2(\cO_v)$, we can find some $\tau_1 \in \Gal(\overline{\QQ} / F(\mu_{p^\infty}))$ mapping to $\rho_{f, v}(\tau_0)^{-1}\stbt1101$. Hence $\tau = \tau_0 \tau_1$ has the desired property.
  \end{proof}

  For $r \ge 1$, let $\cK_r$ denote the composite of $\QQ(\mu_{p^r})$ and the splitting field of the representation $\rho_{f, v} \bmod \varpi^r$, for $\varpi$ a uniformizer of $\cO_v$; and let $\cL_r = F \cK_r$. This is a finite extension of $\QQ$. Note that $\cL_r$ is unramified outside $p D N_f$; and any prime $\ell \nmid p c D N_f$ whose Frobenius in $\Gal(\cL_r / \QQ)$ is conjugate to the image of $\tau$ is necessarily inert in $F$ and 1 modulo $p^r$. So $\ell$ is in the set $\mathcal{P}$ of \cref{def:setofprimes}. By the Chebotarev density theorem there are infinitely many such primes. Moreover, if $T_r = T / \varpi^r T$ and $A_r = T_r^\vee(1) = A[\varpi^r]$, then both $A_r / (\Frob_\ell - 1) A_r$ and $T_r / (\Frob_\ell - 1) T_r$ are free of rank 1 over $\cO / \varpi^r$.

\subsection{Vanishing of some residual Galois cohomology groups}

  \begin{proposition}[Hertzig]
   Let $q$ be an odd prime power. If $q \ne 5$, then we have $H^1\left(\SL_2(\FF_q), \ad\right) = 0$, where $\ad$ denotes the adjoint representation on $\mathfrak{sl}_{2, \FF_q}$. If $q = 5$, then this space is 1-dimensional over $\FF_5$, and $\GL_2(\FF_q)$ acts on it by the character $\det^2$.
  \end{proposition}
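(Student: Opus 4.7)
Since $\ad$ is $p$-torsion and the unipotent upper-triangular subgroup $U \subset G = \SL_2(\FF_q)$ has index $q^2 - 1$ prime to $p$, restriction-corestriction gives an injection $H^1(G, \ad) \hookrightarrow H^1(U, \ad)^T$, where $T$ is the diagonal torus (which is the full normalizer of $U$ modulo $U$). So the problem reduces to computing $T$-invariants of $H^1(U, \ad)$.

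The standard basis $e, h, f$ of $\mathfrak{sl}_2$ consists of $T$-eigenvectors of weights $2, 0, -2$, and these are distinct since $p$ is odd. Hence $0 \subset \FF_q e \subset \FF_q e + \FF_q h \subset \ad$ is a $U$-stable filtration whose graded pieces are trivial $U$-modules carrying $T$-weights $2$, $0$, $-2$. The key ingredient is that for any character $\chi$ of $T$,
\[ H^1(U, \FF_q(\chi))^T = \{\phi : \FF_q^+ \to \FF_q \text{ additive} : \phi(t^2 x) = \chi(t)\phi(x) \text{ for all } t\}. \]
Writing $\phi$ as a $p$-linearized polynomial $\phi(x) = \sum_{i=0}^{n-1} a_i x^{p^i}$ (with $q = p^n$), the condition becomes $a_i(t^{2 p^i} - \chi(t)) = 0$ for all $t \in \FF_q^\times$, forcing each nonzero $a_i$ to satisfy $\chi \equiv 2 p^i \pmod{q-1}$.

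Applying this for $\chi \in \{2, 0, -2\}$ yields a short list of cases where contributions can arise: the weight-$0$ piece contributes only when $q - 1 \mid 2$, i.e. $q = 3$; the weight-$2$ and weight-$(-2)$ pieces survive only for very small $q$, and the weight-$(-2)$ piece yields a 1-dimensional space (spanned by $\phi(x) = x$) precisely when $q = 5$. For $q = p \geq 7$ or $q = p^n$ with $n \geq 2$, all contributions vanish, and the long exact sequence for the filtration gives $H^1(U, \ad)^T = 0$, hence $H^1(G, \ad) = 0$. For $q = 3$, the apparent contribution requires separate analysis because $\SL_2(\FF_3)$ has abelianisation $\ZZ/3$; one verifies by direct computation (using the exceptional isomorphism $\SL_2(\FF_3)/\{\pm 1\} \cong A_4$, say) that no class survives in $H^1(G, \ad)$. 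For $q = 5$, the 1-dimensional invariant class must be shown to lift to $H^1(G, \ad)$ itself, which follows by checking the connecting maps in the long exact sequence vanish for dimensional reasons.

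The $\GL_2$-action for $q = 5$ is obtained by tracking the ambient $\GL_2(\FF_5)$-torus $\FF_5^\times \times \FF_5^\times$ through the same computation: the surviving class lives in the weight-$(-2)$ piece of $\ad$, on which the full torus scales by the character $(t_1, t_2) \mapsto t_1^{-1} t_2$; combined with the action on $U$ by $t_1 / t_2$, the outer $\GL_2/\SL_2 \cong \FF_5^\times$ (embedded as $(t, 1)$) acts on the class by $t^2 = \det^2$. The principal obstacle in this program is the small-$q$ case analysis, particularly the $q = 3$ exception and verifying that the identified $T$-invariant class at the unipotent level genuinely lifts to a $G$-cohomology class for $q = 5$.
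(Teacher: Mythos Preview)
Your approach via restriction to the Borel is sound in principle and quite different from the paper, which simply cites Hertzig's 1969 theorem for $q \ne 5$ and invokes ``direct calculation'' for $q = 5$. In fact your injection $H^1(G,\ad) \hookrightarrow H^1(U,\ad)^T$ is actually an isomorphism: since $U$ is a trivial-intersection Sylow $p$-subgroup (for $g \notin B$ one checks $U \cap gUg^{-1} = 1$), the Cartan--Eilenberg stable-elements criterion shows that every $B$-invariant class comes from $G$. This is what rescues your $q = 5$ argument, where the phrase ``lifts to $H^1(G,\ad)$ \dots\ by checking connecting maps'' otherwise has no clear meaning---there is no long exact sequence relating $H^1(G,\ad)$ to $H^1(B,\ad)$, so without the trivial-intersection argument you have only an upper bound $\dim H^1(G,\ad) \le 1$.

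However, your case analysis of the graded pieces contains errors. First, $H^1(U,\FF_q(2))^T$ \emph{always} contains the identity $\phi_0(x) = x$ (since $2p^0 \equiv 2$ trivially), contrary to ``survives only for very small $q$''; this contribution is killed only by the connecting map from $H^0(U,\FF_q(0))^T = \FF_q$, which sends $\bar h$ to the cocycle $u(x) \mapsto -2xe$, a step you do not carry out. Second, and more seriously, your assertion that ``for $q = p^n$ with $n \ge 2$, all contributions vanish'' is false at $q = 9$: there $2 \cdot 3 \equiv -2 \pmod 8$, so $H^1(U, \FF_9(-2))^T$ is one-dimensional, spanned by $\phi_1(x) = x^3$. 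This class does \emph{not} lift to $H^1(U,\ad)^T$---a lift would require $b$ with $b(x+y) - b(x) - b(y) = xy^3$, but the left side is symmetric in $(x,y)$ while $xy^3 - yx^3$ does not vanish identically in $\FF_9$---so one does get $H^1(G,\ad) = 0$ for $q = 9$ in the end; but this requires an argument your sketch omits entirely.
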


  \begin{proof}
   Theorem 1 (i) of \cite{hertzig69} shows that $H^1(\SL_2(\FF_q), \ad)$ vanishes for $q \ne 5$. The case $q = 5$ can be verified by a direct calculation.
  \end{proof}

  Now suppose $v$ is a big-image prime for $f$, with residue field $\FF_q$; and let $\mathcal{K}_1 = \QQ(\bar{\rho}, \mu_p)$, as above.

  \begin{proposition}
   Let $j \in \ZZ/ (p-1)\ZZ$. If $q = 5$, suppose that $j \ne 2 \pmod 4$. Then we have $H^1\left(\mathcal{K}_1 / \QQ, \ad(\bar{\rho})(j)\right) = 0$. In the exceptional case $(q, j) = (5, 2)$ this group has dimension 1 over $\FF_5$.
  \end{proposition}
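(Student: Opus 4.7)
The plan is to apply the Hochschild--Serre (inflation--restriction) sequence to the normal subgroup $H := \Gal(\mathcal{K}_1/\QQ(\mu_p))$ of $G := \Gal(\mathcal{K}_1/\QQ)$, whose quotient is $G/H = \Gal(\QQ(\mu_p)/\QQ) \cong \FF_p^\times$.

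First I would identify $H$ explicitly: the big-image hypothesis on $f$ gives $\bar\rho(H) \supseteq \SL_2(\FF_q)$, while $\det\bar\rho$ is a power of the mod-$p$ cyclotomic character (the trivial-nebentype hypothesis ruling out any other finite-order factor) and is therefore trivial on $G_{\QQ(\mu_p)}$, so $\bar\rho(H) \subseteq \SL_2(\FF_q)$. Thus $\bar\rho$ induces an isomorphism $H \xrightarrow{\sim} \SL_2(\FF_q)$. Since the twist $(j)$ is trivial on $H$, we have $\ad(\bar\rho)(j)^{H} = \mathfrak{sl}_{2,\FF_q}^{\SL_2(\FF_q)}$, which vanishes for $q$ odd (any fixed element commutes with all of $\SL_2(\FF_q)$ hence is a scalar, and the only trace-zero scalar in odd characteristic is $0$). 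The outer terms of the inflation--restriction sequence therefore vanish, and one obtains an isomorphism
\[ H^1(\mathcal{K}_1/\QQ, \ad(\bar\rho)(j)) \isom H^1\bigl(\SL_2(\FF_q), \mathfrak{sl}_{2,\FF_q}\bigr)^{G/H}. \]

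For $q \ne 5$ Hertzig's theorem makes the right-hand side vanish, finishing the proof. In the case $q = 5$, Hertzig says $H^1(\SL_2(\FF_5), \mathfrak{sl}_{2,\FF_5})$ is one-dimensional over $\FF_5$ with $\GL_2(\FF_5)$ acting through $\det^2$. Pulling back under $\bar\rho$ and combining with the cyclotomic twist, the element $\sigma_a \in \FF_5^\times = G/H$ acts on this line by $a^{2(k+1)+j}$ (the sign in $\det\bar\rho = \omega^{\pm(k+1)}$ being immaterial modulo $4$). Because $f$ has trivial nebentype its weight $k+2$ is even, so $k+1$ is odd and $2(k+1) \equiv 2 \pmod 4$; the character is therefore $\omega^{j+2}$, the invariants are non-zero precisely when $j \equiv 2 \pmod 4$, and when non-zero they fill out the full one-dimensional $H^1$, which is exactly the stated dichotomy. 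The only mildly subtle point is the parity bookkeeping in the exceptional case: one must track both the $\det^2$-twist of Hertzig and the extra cyclotomic $\omega^j$-twist, and invoke the trivial-nebentype hypothesis to reduce the ``exceptional'' congruence $j \equiv -2(k+1) \pmod 4$ to the clean form $j \equiv 2 \pmod 4$ independently of $k$.
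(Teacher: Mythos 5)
Your proof is correct and follows essentially the same route as the paper: inflation--restriction for $\Delta = \Gal(\mathcal{K}_1/\QQ(\mu_p)) \trianglelefteq \Gamma = \Gal(\mathcal{K}_1/\QQ)$, Hertzig's theorem to control $H^1(\SL_2(\FF_q), \mathfrak{sl}_2)$, and the twist computation $(\det\bar\rho)^2\,\omega_5^{\,j} = \omega_5^{\,j-2}$ in the $q=5$ case. The one place you diverge is in showing $\bar\rho(\Delta) = \SL_2(\FF_q)$: you invoke the big-image hypothesis (over $\QQ^{\mathrm{ab}} \supseteq \QQ(\mu_p)$) directly, which is cleaner and dispenses with the separate treatment of $q=3$, whereas the paper argues via the derived subgroup of $\operatorname{Im}(\bar\rho)$ (using that $\Gamma/\Delta$ is abelian and $\SL_2(\FF_q)$ is perfect for $q>3$, with $q=3$ handled by oddness of $\rho$) --- but this is a minor variation rather than a genuinely different argument.
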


  \begin{proof}
   Let us temporarily write $\Gamma$ for $\Gal(\cK_1 / \QQ)$, and $\Delta$ for the normal subgroup $\Gal(\cK_1 / \QQ(\mu_p))$. Evidently $\bar{\rho}$ gives a map $\Gamma \to \GL_2(\FF_q)$. The restriction of this map to $\Delta$ is injective, and takes values in $\SL_2(\FF_q)$. On the other hand, since $\Gamma / \Delta$ is abelian, $\bar{\rho}(\Delta)$ must contain the derived subgroup of $\operatorname{Im}(\bar{\rho})$. For $q > 3$, the group $\SL_2(\FF_q)$ is equal to its own derived subgroup, so we have $\bar{\rho}(\Delta) = \SL_2(\FF_q)$. For $q = 3$ this is not true ($\SL_2(\FF_3)$ is solvable), but since $\rho$ is odd, our hypotheses force $\bar{\rho}(\Gamma)$ to be all of $\GL_2(\FF_3)$, which does have $\SL_2(\FF_3)$ as its derived subgroup, and hence $\bar{\rho}(\Delta) = \SL_2(\FF_q)$ in this case also.

   The inflation-restriction sequence for $M = \ad(\bar{\rho})(j)$ now reads
   \[ 0 \to H^1(\Gamma / \Delta, M^{\Delta}) \to H^1(\Gamma, M) \to H^1(\Delta, M)^{\Gamma / \Delta}.\]
   The first term is evidently 0, since $\Gamma / \Delta$ has order coprime to $p$. We have just seen that $H^1(\Delta, M) = 0$ for $q \ne 5$. In the case $q = 5$, we have seen that $H^1(\Delta, M) \cong \FF_5$, and $\Gamma / \Delta \cong \FF_5^\times$ acts via $(\det \bar{\rho})^{2} \cdot (\omega_5)^j$, where $\omega_5$ is the mod $5$ cyclotomic character. Since $\det \bar{\rho} = \omega_5^{-1-k}$ for an even integer $k$, this is equal to $\omega_5^{j - 2}$, and accordingly the invariants are zero if (and only if) $j \ne 2 \bmod 4$.
  \end{proof}

  \begin{remark}
   We will only use this lemma for $j = 0$ and $j = 1$. The $j = 1$ case (under much weaker assumptions on $\rho$) is \cite[Proposition 1.11]{wiles95}. The $j = 0$ case, assuming $\FF_q$ is the prime field $\FF_p$ and $p > 3$, is part of \cite[Lemma 1.2]{flach92}.
  \end{remark}

 \subsection{D\'evissage}

  We now lift to the $\cO_v$-linear representations. Let $\mathcal{K}_\infty = \bigcup_n \mathcal{K}_n$ as above. Our aim is to prove the following:

  \begin{proposition}\label{prop:nWnstarW}
   Suppose $L_v / \Qp$ is an unramified extension with residue field $\FF_q$. Suppose that $q \ne 3$, and if $q = 5$ that $j \ne 2 \bmod 4$. Then the group $H^1(\mathcal{K}_\infty / \QQ, \ad(\rho_{f, v}) \otimes \Qp/\Zp(j))$ is zero.
  \end{proposition}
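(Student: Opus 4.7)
\smallskip

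The strategy is dévissage to reduce to $\FF_q$-coefficients, followed by an inflation-restriction argument to leverage the previous proposition, and finally an analysis of the pro-$p$ kernel. First, write $\ad(\rho_{f,v}) \otimes \Qp/\Zp(j) = \varinjlim_n T_n(j)$ where $T_n = \ad(\rho_{f,v})/\varpi^n$, and use that continuous cohomology commutes with direct limits. By induction on $n$ using the short exact sequences $0 \to T_{n-1}(j) \to T_n(j) \to T_1(j) \to 0$, together with the vanishing $H^0(\mathcal{K}_\infty/\QQ, T_1(j)) = 0$ (under the big-image hypothesis there are no $\SL_2(\cO_v)$-invariants on the adjoint), the problem reduces to showing $H^1(\mathcal{K}_\infty/\QQ, \ad(\bar{\rho}_{f,v})(j)) = 0$.

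Next, apply the Hochschild-Serre spectral sequence to the tower $\mathcal{K}_\infty / \mathcal{K}_1 / \QQ$, with normal subgroup $N := \Gal(\mathcal{K}_\infty/\mathcal{K}_1)$ and quotient $\bar{\Gamma} := \Gal(\mathcal{K}_1/\QQ)$; note that $N$ acts trivially on the finite module $\ad(\bar{\rho}_{f,v})(j)$. The inflation term $H^1(\bar{\Gamma}, \ad(\bar{\rho}_{f,v})(j))$ vanishes by the previous proposition under our hypotheses on $(q,j)$, so the five-term exact sequence yields an injection
\[
H^1(\mathcal{K}_\infty/\QQ, \ad(\bar{\rho}_{f,v})(j)) \hookrightarrow H^1(N, \ad(\bar{\rho}_{f,v})(j))^{\bar{\Gamma}},
\]
where the target equals $\Hom_{\bar{\Gamma}}(N^{\mathrm{ab}}/p, \ad(\bar{\rho}_{f,v})(j))$.

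Finally, analyse the pro-$p$ kernel $N$ and show the above target has zero image. Under the hypotheses ($L_v/\Qp$ unramified and $q \neq 3$), big image identifies $N$ with a uniform pro-$p$ congruence-type subgroup of $\GL_2(\cO_v) \times \Zp^\times$ cut out by the determinant relation $\det g = t^{-1-k}$; its $\QQ_p$-Lie algebra is (a codimension-one subalgebra of) $\mathfrak{gl}_2(L_v) \oplus L_v$. Lazard's theorem identifies the continuous cohomology of $N$ with $\QQ_p$-coefficients with Lie algebra cohomology, and combined with Whitehead's first and second lemmas for $\mathfrak{sl}_2$ (and a central-character argument in the Lie algebra when $j \not\equiv 0 \pmod{p-1}$), this gives the vanishing of $H^i(N, \ad(\rho_{f,v})\otimes L_v(j))$ for $i=1,2$. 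Using the short exact sequence $0 \to \ad(T)(j) \to \ad(\rho_{f,v})\otimes L_v(j) \to \ad(\rho_{f,v})\otimes L_v/\cO_v(j) \to 0$ and the integral form of Lazard (which applies because the hypothesis $q \neq 3$ guarantees that the relevant principal congruence subgroups are uniform pro-$p$), the corresponding $H^1$ with $\Qp/\Zp$-coefficients vanishes; alternatively, the transgression $d_2$ on any potential surviving class in $\Hom_{\bar{\Gamma}}(N^{\mathrm{ab}}/p, \ad(\bar{\rho}_{f,v})(j))$ is shown to be injective by identifying it with the obstruction to splitting the Lie-theoretic extension $1 \to N \to \Gal(\mathcal{K}_\infty/\QQ) \to \bar{\Gamma} \to 1$. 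The main technical hurdle is the careful handling of this integral Lazard comparison and the explicit identification of the transgression; the excluded cases $q=3$ and $(q,j)=(5, 2 \bmod 4)$ correspond precisely to failures of uniformity and of the Whitehead/Hertzig-type vanishings invoked along the way.
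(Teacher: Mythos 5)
Your d\'evissage to $\FF_q$-coefficients and the Hochschild--Serre reduction along $\cK_\infty/\cK_1/\QQ$ are the right skeleton (this is also how Flach's Lemma~1.2, which the paper cites, begins). But the decisive third step is wrong, and the key input is missing. After the reduction one must show that the \emph{image} of $H^1(\Gal(\cK_\infty/\QQ), \ad(\bar\rho)(j))$ inside $\Hom_{\bar\Gamma}(N^{\mathrm{ab}}/p, \ad(\bar\rho)(j))$ vanishes. That Hom-space is genuinely non-zero when $j = 0$ -- a case the paper needs -- since under big image and $L_v/\Qp$ unramified, $N^{\mathrm{ab}}/p$ contains $\ad(\bar\rho)$ as a $\bar\Gamma$-direct summand (the first congruence quotient of $\SL_2(\cO_v)$), so the identity map is a non-zero $\bar\Gamma$-equivariant homomorphism. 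No Lazard/Whitehead computation with $L_v$- or $\Qp/\Zp$-coefficients can make $H^1(N, \ad(\bar\rho)(j)) = \Hom(N^{\mathrm{ab}}/p, \ad(\bar\rho)(j))$ vanish: it is a non-zero group for any non-trivial pro-$p$ group $N$ acting trivially on the coefficients, and in any case you are computing with the wrong coefficient module for the object the d\'evissage produced.

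What actually closes the argument is the injectivity of the transgression $d_2 \colon H^1(N, \ad(\bar\rho)(j))^{\bar\Gamma} \to H^2(\bar\Gamma, \ad(\bar\rho)(j))$, and the input for this -- which you gesture at in your last sentence but never pin down -- is the non-splitness of the central extension $1 \to \ad \to \SL_2(\cO_v/p^2) \to \SL_2(\FF_q) \to 1$. This is Serre's theorem for $q = p \geq 5$, and Proposition~3.7 of Manoharmayum for all non-prime $q \ne 3$; it is the key missing citation in your proof. Relatedly, your attribution of the $q=3$ exclusion to a ``failure of uniformity'' in Lazard theory is incorrect: $q = 3$ is excluded precisely because that extension genuinely \emph{does} split, as the paper's subsequent remark explains -- not for any reason from $p$-adic analytic group theory. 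The paper's own proof is a one-line citation of Flach's Lemma~1.2, with the following remark making this non-splitness explicit as the crucial technical ingredient.
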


  \begin{proof}
   This follows by the same argument as in Lemma 1.2 of \cite{flach92}.
  \end{proof}

  \begin{remark}
   Note that Flach's argument relies on the assertion that the central extension
   \[ 1 \to \ad \to \SL_2(\cO_v / p^2) \to \SL_2(\FF_q) \to 1\]
   does not split as a semidirect product (i.e.~its class in $H^2(\SL_2(\FF_q), \ad)$ is non-zero). If $q = p$ for a prime $p \ge 5$ this is a well-known theorem of Serre, see \cite[\S IV.3.4]{serre68b}. For the extension to all non-prime finite fields other than $\FF_3$ (including the characteristic 2 cases), see Proposition 3.7 of \cite{manoharmayum15}. In contrast, the extension genuinely is split for $q = 3$ (or if $L_v / \Qp$ is ramified).
  \end{remark}

  The above argument does not involve the real quadratic field $F$. However, letting $\mathcal{L}_\infty = F \mathcal{K}_{\infty}$ as before, and noting that $\Gal(\cL_\infty / \cK_\infty)$ has order 1 or 2 and hence has vanishing cohomology with coefficients in $\Zp$-modules, we have $H^1\left(\mathcal{L}_\infty / \QQ, \ad(\rho_{f, v}) \otimes \Qp/\Zp(j)\right) = 0$ likewise.

 \subsection{Removing the $c$-factor}\label{ss:choiceofc}

  From now on we assume $f$ satisfies all of the assumptions of \cref{thm:main}. Then we have the following:

  \begin{theorem}\label{thm:divX}
   There exist classes  $\AFh_{\cyc, m}(\Pi) \in H^1_{\Iw}(\QQ^{\cyc}(m), T)$, for each $m \in \cR$, which satisfy the Euler system norm relations and such that
   \[ \cAFh_{\cyc, m}(\Pi) = c^2 (1 - \sigma_c^2) \cdot \AFh_{\cyc, m}(\Pi) \]
   for all $c > 1$ coprime to $6p m N_f$.
  \end{theorem}

  \begin{proof}
   We choose a topological generator $\gamma$ of $\Gamma^{\cyc}$ and let $X = [\gamma] - 1 \in \Lambda^{\cyc}$ (so that $\Lambda^{\cyc} = \cO_v[[X]]$). From big image, we have $H^0(\QQ(m), T) = 0$ for all $m$. Hence multiplication by $X$ is injective on $H^1_{\Iw}(\QQ^\cyc(m), T)$, and the cokernel $H^1_{\Iw}(\QQ^\cyc(m), T) / X$ is identified with a subspace of $H^1(\QQ(m), T)$.

   We have $c^2 (1 - \sigma_c^2) \in X\Lambda^{\cyc}$, for any $c > 1$ coprime to $p$, and the ratio is a unit if $\sigma_c$ generates $\Gamma^{\cyc}$. It follows that if we set, for each $m$,
   \[
    \gAFh_{\cyc, m}(\Pi) \coloneqq
    \left(\frac{d^2 (1 - \sigma_d^2)}{X}\right)^{-1} {}_d \AFh_{\cyc, m}(\Pi) \in H^1_{\Iw}(\QQ^{\cyc}(m), T),
   \]
   for some choice of $d > 1$ (depending on $m$) such that $\sigma_d$ generates $\Gamma^{\cyc}$ and maps to 1 in $\Delta_m$, then $\gAFh_{\cyc, m}(\Pi)$ is independent of the choice of $d$; and for any $c$ (not necessarily mapping to 1 in $\Delta_m$) we have
   \[ X \cdot \cAFh_{\cyc, m}(\Pi) = c^2 (1 - \sigma_c^2) \cdot \gAFh_{\cyc, m}(\Pi).\]

   For any given $m$, since $\Delta_m$ is a cyclic group of (odd) $p$-power order, we may choose $c$ such that $\sigma_c^2$ is a generator; it follows that $\gAFh_{\cyc, m}(\Pi) \bmod X \in H^1(\QQ(m), T)$ maps to 0 under any non-trivial character of $\Delta_m$, i.e.~it is $\Delta_m$-invariant.

   We show in Appendix A below that these conditions in fact imply that $\gAFh_{\cyc, m}(\Pi) \bmod X = 0$ for all $m$, so $\gAFh_{\cyc, m}(\Pi)$ is divisible by $X$ in $H^1_{\Iw}(\QQ(m), T)$. We have seen that this group has no $X$-torsion, so the quotients $\AFh_{\cyc, m}(\Pi) = X^{-1} \gAFh_{\cyc, m}(\Pi)$ are uniquely-defined and satisfy the Euler system norm relations.
  \end{proof}

  \begin{remark}
   Since $H^0(\QQ(m) \otimes \Qp, \gr^0 T) = 0$, it follows that $H^1_{\Iw}(\QQ^\cyc(m) \otimes \Qp, \gr^0 T)$ has no $X$-torsion. So, as $X\cdot \AFh_{\cyc, m}$ maps to 0 in $H^1_{\Iw}(\QQ^\cyc(m) \otimes \Qp, \gr^0 T)$, the same is also true of $\AFh_{\cyc, m}$; that is, the classes $\AFh_{\cyc, m}$ satisfy the local condition at $p$ defined by the subrepresentation $\sF^1 T$. We may also argue similarly with the ``primitive'' classes $\AFh{}^{\mathrm{pr}}_{\cyc, m}$.
  \end{remark}

\section{Bounding the Selmer group}
\label{sec:Selmer1}

 \subsection{A first Selmer group bound}

  We first consider the auxiliary Selmer group $H^1_{\Gr, 2}(\QQ^{\cyc}, A)$, in which the local condition at $p$ is given by $\sF^2 A \subset \sF^1 A$; note that this local condition is the dual of the local condition satisfied by our Euler system classes. (Compare Theorem 5.4.1 of \cite{loefflerzerbes19}, which is the analogue for sufficiently non-trivial twists of $T$ rather than $T$ itself.)

  \begin{theorem}
   \label{thm:1stSelmerbound}
   Suppose that $\AFh_{\cyc, 1}(\Pi)$ is non-zero as an element of $H^1_{\Iw}(\QQ^\cyc, T)$. Then $H^1_{\Iw, \Gr, 1}(\QQ^{\cyc}, T)$ is free of rank one over $\Lambda^\cyc$, $H^1_{\Gr, 2}(\QQ^{\cyc}, A)^\vee$ is torsion, and we have the divisibility of characteristic ideals
   \[
     \operatorname{char}_{\Lambda^\cyc} \left(H^1_{\Gr, 2}(\QQ^{\cyc}, A)^\vee\right)
     \mathrel{\Big|}
     \operatorname{char}_{\Lambda^\cyc} \left(\frac{H^1_{\Iw, \Gr, 1}(\QQ^{\cyc}, T)}{\AFh_{\cyc, 1}(\Pi)}\right).
   \]
  \end{theorem}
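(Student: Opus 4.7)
My plan is to run the standard Euler system machinery, in the style of Kolyvagin and Rubin, applied to the modified classes $\{\mathbf{z}_m\}_{m \in \cR}$ of \cref{prop:modifyES}, with the structural statements about Iwasawa cohomology handled separately.

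First I would establish the freeness of $H^1_{\Iw, \Gr, 1}(\QQ^\cyc, T)$. The big-image hypothesis ensures that $V$ is absolutely irreducible, and in fact that there are no nonzero $G_{\QQ^\cyc}$-invariants in any $T / \varpi^n T$; this forces $H^1_\Iw(\QQ^\cyc, T)$ to be torsion-free over $\Lambda^\cyc$. The global Euler--Poincar\'e formula, using the oddness of $\rho_{f, v}$, yields its $\Lambda^\cyc$-rank as $\dim V^{c = -1} = 2$. The ordinarity hypothesis identifies $T / \sF^1 T$ as an unramified free $\cO_v$-module of rank one, so its local Iwasawa cohomology at $p$ has $\Lambda^\cyc$-rank one. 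Combining these via Poitou--Tate pins down $H^1_{\Iw, \Gr, 1}(\QQ^\cyc, T)$ as a torsion-free $\Lambda^\cyc$-module of rank exactly one; freeness then follows because $\Lambda^\cyc \cong \cO_v[[X]]$ is a two-dimensional regular local UFD, over which every reflexive rank-one module is free.

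For the divisibility of characteristic ideals, I would apply an abstract Euler system theorem --- Rubin's \cite{rubin00}, or a variant tailored to Greenberg-type local conditions as in \cite{loefflerzerbes19} --- to the family $\{\mathbf{z}_m\}$. The hypotheses to verify are: (i) the Euler system norm relations, supplied by \cref{prop:modifyES}(ii); (ii) the existence of infinitely many ``Kolyvagin primes'' inside the restricted set $\cR$, supplied by \cref{cor:existtau} together with the Chebotarev density theorem applied to the fixed field of $\tau$, which also gives the required freeness of $T_r / (\Frob_\ell - 1) T_r$; (iii) the cohomological vanishing statements of \cref{prop:nWnstarW}, which rule out obstructions to the lifting of Kolyvagin derivative classes; and (iv) the assertion that each $\mathbf{z}_m$ satisfies the Greenberg local condition at $p$, which follows from \cref{prop:IwasawaAFclass}(iii) since $\mathbf{z}_m$ is obtained from $\cAF^\cF_{\cyc, m}$ by multiplication by elements of $\Lambda^\cyc$. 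The machine then outputs the stated divisibility; the non-triviality of the right-hand side --- equivalently, the torsion-ness of $H^1_{\Gr, 2}(\QQ^\cyc, A)^\vee$ --- is automatic from the freeness of $H^1_{\Iw, \Gr, 1}(\QQ^\cyc, T)$ together with the standing hypothesis that $\cAF^\cF_{\cyc, 1}$ is nonzero.

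The main obstacle will be ensuring that the Euler system machine meshes correctly with the Greenberg local condition at $p$ and with the restricted prime set $\cR$. The classical Rubin framework is typically formulated for unramified-style local conditions and for Chebotarev-dense prime sets, whereas here we need the paired Greenberg conditions $(\Gr, 1, \Gr, 2)$ and only primes inert in $F$ and congruent to $1$ mod $p$. The careful design of \cref{prop:modifyES} (producing clean norm relations with exactly the symmetric-square Euler factor, achieved by restricting to $m \in \cR$ so that the spurious degree-one Asai factor becomes a unit) and of Section 4 (confirming that $\cR$ contains enough Kolyvagin primes with the right Frobenius action) are precisely the ingredients that allow the standard machine to run; fitting them together with the Greenberg formalism is then a direct adaptation of the arguments of \cite{loefflerzerbes19}.
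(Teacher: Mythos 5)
Your proposal follows essentially the same route as the paper: both invoke the Kolyvagin-system machinery with Greenberg local conditions from \cite[\S 12]{KLZ17} as modified in the appendix of \cite{loefflerzerbes19} (to accommodate the restricted prime set $\cR$), with \cref{cor:existtau} and \cref{prop:nWnstarW} supplying the two non-trivial hypotheses $(H.2^\sharp)$ and $(H.3^\sharp)$. The paper's proof is terser and does not spell out the freeness of $H^1_{\Iw,\Gr,1}(\QQ^\cyc,T)$ or the role of the modified classes $\mathbf{z}_m$ explicitly, but both are implicit in the cited machinery, so your additional structural discussion is a correct unpacking of the same argument rather than a different approach.
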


  \begin{proof}
   This follows from Corollary 12.3.5 of \cite{KLZ17} (taking the $T^+$ of \emph{op.cit.} to be $\sF^1 T$), with the modifications explained in \cite[Appendix]{loefflerzerbes19} to deal with the fact that our auxiliary primes have to be inert in $F$. In order to apply the machinery, we have to verify the conditions $(H.0^\sharp)-(H.5^\sharp)$ of the appendix of \cite{loefflerzerbes19}. Condition $(H.2^\sharp)$ is \cref{cor:existtau}; and Condition $(H.3^\sharp)$ is \cref{prop:nWnstarW} (for $j = 0$ and $j = 1$); the remaining conditions are straightforward.
  \end{proof}

  We now apply Poitou--Tate duality to bound the Selmer group of $A$ with the $\sF^1$ local condition.

  \begin{definition}
   Denote by
   \[ \cL^{\PR} : H^1_{\Iw}(\QQ^\cyc_{p}, \gr^1 T) \to \Lambda^\cyc \otimes \gr^1 T \]
   the Perrin-Riou regulator map.
  \end{definition}

  \begin{proposition}
   Let $\xi \in \gr^1 T^*$ be a vector giving a choice of isomorphism
   \[ \gr^1 T \cong \cO_v.\]
   If $\cL^{\PR}\left(\AFh_{\cyc, 1}(\Pi)\right) \ne 0$, then we have
   \[ \operatorname{char}_{\Lambda^\cyc}
    \left(\frac{H^1_{\Iw}(\QQ_{\infty, p}^\cyc, \gr^1 T)}
     {\loc_p\left( \AFh_{\cyc, 1}(\Pi) \right)}\right) =
   \left\langle \cL^{\PR}\left(\AFh_{\cyc, 1}(\Pi)\right), \xi \right\rangle.\]
  \end{proposition}

  \begin{proof}
   By Theorem 8.2.3 and Remark 8.2.4 of \cite{KLZ17}, we hence have an exact sequence
   \[
    0\rightarrow \cO_v \rightarrow
    H^1_{\Iw}(\QQ_{\infty, p}^\cyc, \gr^1 T)\rightarrow \Lambda^{\cyc}\rightarrow  \cO_v \rightarrow 0,
   \]
   where the middle map is given by $\langle \cL^{\PR}(-),\, \xi\rangle$. Since by assumption $\loc_p\left( \AFh_{\cyc, 1}(\Pi)\right)$ does not lie in the kernel of this map, the result follows.
  \end{proof}

  By a standard Poitou--Tate duality computation (compare \cite[Theorem 11.6.4]{KLZ17}) we obtain the following:

  \begin{corollary}
   \label{thm:2ndSelbound}
   If $\cL^{\PR}\left(\AFh_{\cyc, 1}(\Pi)\right) \ne 0$, then $H^1_{\Gr, 1}(\QQ^{\cyc}, A)$ is a cotorsion $\Lambda^\cyc$-module, and we have the divisibility
   \[
    \operatorname{char}_{\Lambda^\cyc} \left(H^1_{\Gr, 1}(\QQ^{\cyc}, A)^\vee\right)\ \big|\ \left\langle\cL^{\PR}\left(\AFh_{\cyc, 1}(\Pi)\right)\!, \xi \right\rangle.
   \]
   Moreover, equality holds iff equality holds in \cref{thm:1stSelmerbound}.\myqed
  \end{corollary}

  Using the classes $\left(\AFh{}^{\mathrm{pr}}_{\cyc, m}(\Pi)\right)_{m \in \cR}$, we also have the analogous bound with $\AFh_{\cyc, 1}(\Pi)$ replaced by $\AFh{}^{\mathrm{pr}}_{\cyc, 1}(\Pi)$. In the following section we show that $\left\langle\cL^{\PR}\left(\AFh{}^{\mathrm{pr}}_{\cyc, 1}(\Pi)\right)\!, \xi \right\rangle$ is related to $L_p(\ad f)^{\cyc}$ (c.f. Theorem \ref{thm:ERL}).


 \subsection{An explicit reciprocity law}

  As above, let $\xi \in \gr^1 T^*$ be a vector giving a choice of isomorphism
  \[ \gr^1 T \cong \cO_v.\] Then the explicit reciprocity law of \cite{grossiloefflerzerbes-GO4} shows that
  \[
   \left\langle \cL^{\PR}\left(\AFh_{\cyc, 1}(\Pi)\right), \xi \right\rangle =
    R_{\xi} \cdot L_{p, \As}^{\imp}(\Pi),
  \]
  where $L_{p, \As}^{\imp}(\Pi)$ is the imprimitive Asai $p$-adic $L$-function defined in \cite{grossiloefflerzerbesLfunct}, and $R_\xi \in L^\times$ is a constant depending on $\xi$ and the choice of periods used to define $L_{p, \As}^{\imp}(\Pi)$. From \cref{prop:localfactors} it follows that we have
  \[ \left\langle \cL^{\PR}\left(\AFh{}_{\cyc, 1}^{\mathrm{pr}}(\Pi)\right), \xi \right\rangle =
      p^t R_{\xi} \cdot L_{p, \As}(\Pi), \]
  where $L_{p, \As}(\Pi)$ is the \emph{primitive} Asai $L$-function.

  \begin{theorem}\label{thm:krekovthesis}
   We have
   \[L_{p, \As}(\Pi) = R_\xi' \cdot L_p(\ad f,s)^{\cyc} \cdot L_{p}(\varepsilon, s+1),
   \]
   where $R_{\xi}' \in L^\times$ is another constant factor.
  \end{theorem}

  \begin{proof}
   This is the main result of the forthcoming ETH Z\"urich PhD thesis of Dimitrii Krekov, generalising the well-known result of Dasgupta \cite{Dasgupta-factorization} for $p$-adic Rankin--Selberg $L$-functions.
  \end{proof}

  As a consequence, we obtain the following bound for the Selmer group:

  \begin{corollary}\label{thm:ERL}
   The $\Lambda^{\cyc}$-module $H^1_{\Gr, 1}(\QQ^{\cyc}, A)$ is cotorsion, and we have the divisibility
    \[
    \operatorname{char}_{\Lambda^\cyc} \left(H^1_{\Gr, 1}(\QQ^{\cyc}, A)^\vee\right)\ \big|\ p^t \cdot
    L_p(\ad f, \sigma)^{\cyc} \cdot L_{p}(\varepsilon, \sigma + 1)
    \]
    for some $t \in \ZZ$.
  \end{corollary}

\subsection{Removing the Dirichlet character}

To dispose of the Dirichlet-character term, we recall the following beautiful theorem from \cite{byeon01}:

\begin{theorem}[Byeon]\label{thm:byeon}
	Let $p \ge 3$ be prime and let $F$ be the real quadratic field $\QQ(\sqrt{p^2 + 4})$. Then $p$ splits in $F$, and $L_p(\varepsilon_F, 1)$ is a $p$-adic unit.
\end{theorem}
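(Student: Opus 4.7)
The plan is to verify both assertions by direct computation in $F = \QQ(\sqrt{p^2+4})$. For the splitting of $p$: since $p^2 + 4 \equiv 4 \pmod p$ is a nonzero square in $\FF_p$, and the discriminant $D$ of $F$ divides $p^2 + 4$ and is therefore coprime to $p$, the polynomial $x^2 - (p^2+4)$ factors mod $p$ into distinct linear factors, so $p$ splits in $\cO_F$; in particular $\varepsilon_F(p) = 1$.

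For the unit assertion, I would exploit the classical Leopoldt-type explicit formula, which (up to a nonzero rational constant) expresses
\[
L_p(\varepsilon_F, 1) = \left(1 - \tfrac{\varepsilon_F(p)}{p}\right) \cdot \frac{\log_p(\varepsilon)}{\sqrt{D}},
\]
where $\varepsilon$ is a fundamental unit of $\cO_F$ and $\log_p$ is Iwasawa's $p$-adic logarithm, computed via the embedding $F \hookrightarrow \Qp$ determined by one of the two primes above $p$. The element $u = (p+\sqrt{p^2+4})/2$ satisfies $X^2 - pX - 1 = 0$, hence is a unit of $\cO_F$ of norm $-1$; since the fundamental unit $\varepsilon$ must then also have norm $-1$, we have $u = \pm\varepsilon^n$ for some \emph{odd} integer $n$, and in particular $p \nmid n$. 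Fix the embedding $F \hookrightarrow \Qp$ with $\sqrt{p^2+4} \equiv 2 \pmod p$; the identity $(\sqrt{p^2+4}-2)(\sqrt{p^2+4}+2) = p^2$ gives
\[
u - 1 = \tfrac{p}{2}\left(1 + \tfrac{p}{\sqrt{p^2+4}+2}\right),
\]
and since the parenthesised factor is a $p$-adic unit, $v_p(u-1) = 1$ and hence $v_p(\log_p u) = 1$. Using $\log_p u = n \log_p \varepsilon$ with $p \nmid n$ yields $v_p(\log_p \varepsilon) = 1$, and as $\sqrt{D}$ is also a $p$-adic unit, Leopoldt's formula gives $v_p(L_p(\varepsilon_F, 1)) = -1 + 1 + 0 = 0$, so $L_p(\varepsilon_F, 1)$ is a $p$-adic unit.

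The real content of Byeon's theorem — and the main obstacle in the argument — is the arithmetic \emph{coincidence} that makes this work: the Euler factor $(1 - 1/p)$ would ordinarily contribute a pole, and this pole is cancelled precisely by the simple zero of $\log_p(u)$ coming from the borderline congruence $u \equiv 1 + p/2 \pmod{p^2}$. Identifying an explicit one-parameter family of real quadratic fields for which this cancellation occurs is the main insight; once one has the fundamental unit in the prescribed shape, the remainder reduces to the routine Leopoldt-style valuation count carried out above.
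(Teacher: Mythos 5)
The paper does not prove this theorem; it is a citation of Byeon's result (\cite{byeon01}), so there is no in-house proof to compare against. Evaluating your argument on its own terms, there is a genuine gap.

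Your Leopoldt-type formula is correct in shape, but the phrase ``up to a nonzero rational constant'' conceals the class number $h_F$. More precisely, the Kubota--Leopoldt formula
\[ L_p(\varepsilon_F, 1) = -\Bigl(1-\tfrac{\varepsilon_F(p)}{p}\Bigr)\frac{\tau(\varepsilon_F)}{D}\sum_{a}\overline{\varepsilon_F}(a)\log_p(1-\zeta_D^a) \]
expresses the value in terms of the logarithm of a \emph{cyclotomic} unit $\eta$, not the fundamental unit $\varepsilon$. The two are related (up to sign and normalisation) by $\eta = \varepsilon^{\pm h_F}$, so after unwinding one finds
\[ v_p\bigl(L_p(\varepsilon_F, 1)\bigr) = -1 + v_p(h_F) + v_p(\log_p\varepsilon). \]
Your valuation count correctly gives $v_p(\log_p\varepsilon)=1$ (via the element $u=(p+\sqrt{p^2+4})/2$), but the conclusion that $L_p(\varepsilon_F,1)$ is a unit additionally requires $p\nmid h_F$. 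This is not a triviality: it is in fact the central content of Byeon's paper, whose very title is about \emph{indivisibility of class numbers}. You have not addressed it, and there is no elementary argument in sight; Byeon establishes it using Ono's and Ono--Skinner's results on indivisibility of class numbers via the theory of half-integral weight modular forms. Without this input, the argument does not close.

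There is also a small logical slip worth flagging: from the norm-$(-1)$ considerations you conclude that $u = \pm\varepsilon^n$ with $n$ odd, and then assert ``in particular $p\nmid n$.'' This implication is false for odd $p$ (e.g.\ $n$ could be $p$ itself). Fortunately the conclusion $p\nmid n$ follows anyway: since $v_p(\log_p\varepsilon)\ge 1$ for any $p$-adic unit (for $p$ odd, $\log_p$ kills the Teichm\"uller part and $\langle\varepsilon\rangle\in 1+p\Zp$), the identity $\log_p u = n\log_p\varepsilon$ together with $v_p(\log_p u)=1$ forces $v_p(n)=0$ and $v_p(\log_p\varepsilon)=1$. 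So this part is repairable, but the class number gap is not.
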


\begin{remark}
	Byeon also shows that for $p > 3$ there are infinitely many more fields $F$ which have this property, but a single field suffices for our purposes.
\end{remark}

\begin{corollary}[Theorem A]\label{cor:prelim}
	Let $p \ge 3$ be prime. Assume that
	\begin{itemize}
		\item $f$ is a cuspidal modular newform of weight $k + 2 \ge 2$ and trivial nebentype.
		\item $p > 2$ is a rational prime not dividing the level of $f$, and $v$ a prime of the coefficient field above $p$, at which $f$ is ordinary, $p$-distinguished, and has big Galois image.
		\item The extension $[L_v : \Qp]$ is unramified.
		\item If $p = 3$ then $[L_v : \Qp] > 1$.
	\end{itemize}
	Then the Pontryagin dual $H^1_{\Gr, 1}\left(\QQ^{\cyc}, A\right)^\vee$ is a torsion $\Lambda^{\cyc}$-module, and we have the divisibility of characteristic ideals
	\begin{equation}\label{eq:Xbound}
		\operatorname{char}_{\cO_v[[\Gamma^\cyc]]}\left( H^1_{\Gr, 1}\left(\QQ^{\cyc}, A\right)^\vee\right) \ \Big|\  p^t\cdot L_p(\ad f)^{\cyc},
	\end{equation}
	for some $t \ge 0$.
\end{corollary}

\begin{proof}
 Since $L_p(\varepsilon_F, 1)$ is a $p$-adic unit, the $p$-adic $L$-function $L_p(\varepsilon_F, \sigma + 1)$ is a unit in the Iwasawa algebra of the cyclotomic $\Zp$-extension. The result therefore follows from \cref{thm:ERL}.
\end{proof}

\appendix

\section{Residues of Euler systems}
 \label{sect:app-residues}

 \subsection{Setup}

  We suppose $L/\Qp$ is finite with ring of integers $\cO$; $T$ is a finite-rank free $\cO$-module with continuous $G_{\QQ}$-action, unramified outside a finite set $\Sigma \ni p$; and $\cP$ is a set of primes such that $(T, \cP$ satisfies the conditions (H.0)--(H.5) of \emph{op.cit.}. We shall also suppose $T$ has rank $> 1$, so (H.1) implies that $H^0(\QQ, T \otimes k_L) = H^0(\QQ, T^*(1) \otimes k_L) = 0$.

  Let $\cR$ be the set of squarefree products of primes in $\cP$, and $\mathbf{z} = (z_m)_{m \in \cR}$ a set of elements in $H^1_{\Iw}(\QQ^{\cyc}(m), T)$ satisfying the Euler system norm relation\footnote{For concreteness we use the normalisations of \cite{rubin00}, rather than \cite{mazurrubin04}, the difference being a factor of $\ell$ in the norm relation.}. We write $c_m$ for the image of $z_m$ in $H^1(\QQ(m), T)$, and we suppose the following condition holds:

  \begin{assumption}
   For every $m \in \cR$ and every \emph{non-trivial} character $\chi$ of $\Delta_m$, the image of $c_m$ in $H^1(\QQ, T(\chi))$ is zero.
  \end{assumption}

  We want to deduce from this that in fact $c_m = 0$ for all $m$, so that $z_m$ is divisible by $\gamma - 1$ in $H^1_{\Iw}(\QQ^\cyc(m), T)$ for all $m$. From the norm relations, it suffices to prove that $c_1 = 0$.

 \subsection{Kolyvagin classes}

  For $M \in \cO$, let $\bar{c}_1$ be the image of $c_1 \in H^1(\QQ, T/MT)$. It suffices to show that $\bar{c}_1 = 0$ (for an arbitrary $M$), since $H^1(\QQ, T)$ has no divisible elements.

  We define derived Kolyvagin classes $\kappa_{[F, m, M]}$ as in \cite{rubin00}; we shall only need the case when $m = \ell$ is a prime in $\cP$, and $F = \QQ$, so we write simply $\kappa_{[\ell, M]}$. Here $\ell$ is assumed to satisfy the conditions
  \begin{itemize}
   \item $M \mid \ell - 1$,
   \item $M \mid P_\ell(1)$.
  \end{itemize}

  Under our present assumptions, $\kappa_{[\ell, M]}$ can be characterised as the unique class in $H^1(\QQ, T / MT)$ whose image in $H^1(\QQ(\ell), T / M T)$ is equal to $D_\ell c_\ell \bmod M$, where $D_\ell = \sum_{i = 1}^{|\Delta_\ell| - 1} i \sigma^i$, for a choice of generator $\sigma$ of the cyclic group $\Delta_\ell$.

  \begin{proposition}
   For $\ell$ as above, the class $\kappa_{[\ell, M]}$ is zero.
  \end{proposition}

  \begin{proof}
   It suffices to show that $D_\ell c_\ell = 0 \bmod M$ in $H^1(\QQ(\ell), T)$.

   By hypothesis, $c_\ell$ has zero image under all non-trivial characters of $\Delta_\ell$, so $c_\ell$ is $\Delta_\ell$-invariant as an element of $H^1(\QQ(\ell), T[1/p])$. Since $H^0(\QQ, T \otimes k_L) = 0$, and $\QQ(\ell) / \QQ$ is a $p$-extension, we have $H^0(\QQ(\ell), T \otimes k_L) = 0$ as well; it follows that $H^1(\QQ(\ell), T)$ is torsion-free and thus $c_\ell$ is $\Delta_\ell$-invariant. Thus $D_\ell$ acts on $c_\ell$ as multiplication by $\sum_{i = 1}^{|\Delta_\ell|-1} i = 0 \bmod M$.
  \end{proof}

  \begin{corollary}
   For any prime $\ell$ as above, we have $\phi_\ell^{\mathrm{fs}}\left(\loc_{\ell} \bar{c}_1\right) = 0$, where $\phi_\ell^{\mathrm{fs}} : H^1_{\mathrm{f}}(\Ql, T/MT) \to \frac{H^1(\Ql, T/MT)}{H^1_{\f}(\Ql, T/MT)}$ is the ``finite-singular comparison'' map \cite[Definition 4.5.3]{rubin00}.
  \end{corollary}

  \begin{proof}
   This follows from the previous proposition and Theorem 4.5.4 of \cite{rubin00}.
  \end{proof}

  \begin{remark}
   Note that the proof of this result fundamentally uses the existence of the Iwasawa cohomology classes $z_m$ mapping to $c_m$; it would not be enough just to have the $c_m$ alone.
  \end{remark}

  In particular, if $\ell$ is chosen such that $(T / MT) / (\Frob_\ell - 1)$ is free of rank 1 over $\cO / M\cO$, then $\phi_\ell^{\mathrm{fs}}$ is a bijection and we can conclude that $\loc_{\ell} c_1 = 0 \bmod M$. However, Lemma 5.2.1 of \cite{rubin00} which shows that among primes satisfying these conditions, there is a positive-density subset such that $\loc_{\ell} (c_1 \bmod M)$ has the same order as $c_1 \bmod M$. So we obtain a contradiction unless $c_1 = 0 \bmod M$. Since $M$ was arbitrary, the result follows.

  \begin{remark}
   More generally, given a pair of representations $(T_1, T_2)$ such that $(T_1, T_2, \cP)$ satisfies the conditions (H.$0^\sharp$)--(H.$5^\sharp$) of \cite[Appendix]{loefflerzerbes19}, and an Euler system for $T_1$ satisfying the above conditions, we again obtain the same conclusion using the slight modifications to Rubin's arguments explained \emph{loc.cit.}.
  \end{remark}

\providecommand{\noopsort}[1]{\relax}
\providecommand{\bysame}{\leavevmode\hbox to3em{\hrulefill}\thinspace}
\providecommand{\MR}[1]{%
 MR \href{http://www.ams.org/mathscinet-getitem?mr=#1}{#1}.
}
\providecommand{\href}[2]{#2}
\newcommand{\articlehref}[2]{\href{#1}{#2}}

\end{document}